\author{Josse van Dobben de Bruyn}
\title{Representations and Semisimplicity of\texorpdfstring{\\}{} Ordered Topological Vector Spaces}
\date{24 September 2020}
\newcommand{\myaddr}[1]{\gdef\my@address{\par\textsc{#1}}}
\newcommand{\mycuraddr}[1]{\gdef\my@curaddr{\par\textsc{#1}}}
\newcommand{\myemail}[1]{\gdef\my@email{\par\textit{E-mail address:} \texttt{\href{mailto:#1}{#1}.}}}
\newcommand{\mysubjclass}[2][2010]{\gdef\my@subjclass{#1 \textit{Mathematics Subject Classification}. #2.}}
\newcommand{\mykeywords}[1]{\gdef\my@keywords{\textit{Key words and phrases}. #1.}}
\newcommand{\mymaketitle}{%
	\let\@oldthanks\@thanks%
	\gdef\@thanks{\@oldthanks\footnotetext{\my@subjclass}\footnotetext{\my@keywords}}%
	\maketitle
}
\def\mylinkcolor{black!75!blue}
\newcommand{\myautoref}[2]{\hyperref[#2]{\autoref*{#1}\ref*{#2}}}  
\newcommand{\mysecref}[1]{\hyperref[#1]{\S\ref*{#1}}} 
\newcommand{\myref}[1]{\textit{\ref{#1}}}
\newcommand{\hair}{\ifmmode\mskip1mu\else\kern0.08em\fi}
\newcommand{\exampleqed}{\ensuremath{\scriptstyle\triangle}}
\numberwithin{equation}{section}
\declaretheorem[style=definition,sibling=equation]{definition}
\declaretheorem[style=definition,qed=\exampleqed,sibling=definition]{example}
\declaretheorem[style=definition,sibling=definition]{remark}
\declaretheorem[style=plain,sibling=definition]{theorem}
\declaretheorem[style=plain,numbered=unless unique]{theorem S}
\declaretheorem[style=plain,numbered=unless unique]{theorem K}
\declaretheorem[style=plain,sibling=definition]{lemma}
\declaretheorem[style=plain,sibling=definition]{proposition}
\declaretheorem[style=plain,sibling=definition]{corollary}
\declaretheorem[style=plain,title={Theorem}]{AlphTheorem}
\declaretheorem[style=plain,title={Lemma},sibling=AlphTheorem]{AlphLemma}
\declaretheorem[style=plain,title={Corollary},sibling=AlphTheorem]{AlphCorollary}
\newcommand{\N}{\ensuremath{\mathbb{N}}}
\newcommand{\R}{\ensuremath{\mathbb{R}}}
\DeclareSymbolFont{bbold}{U}{bbold}{m}{n}
\DeclareSymbolFontAlphabet{\mathbbold}{bbold}
\newcommand{\one}{\ensuremath{\mathbbold{1}}}
\newcommand{\weak}{{w}}
\newcommand{\weakstar}{{w*}}
\newcommand{\discr}{d}
\newcommand{\ball}[1]{B_{{#1}}}
\DeclareMathOperator{\cone}{cone}
\newcommand{\algdual}{^*}
\newcommand{\topdual}{'}
\newcommand{\topdualdual}{''}
\DeclareMathOperator{\lineal}{lin}
\newcommand{\mywedge}{\mathcal{K}}
\DeclareMathOperator{\spn}{span}
\newcommand{\lintop}{\mathfrak T}
\newcommand{\seminorm}{\rho}
\newcommand{\Ell}{\mathscr{L}}
\newcommand{\ABt}{t} 
\newcommand{\ECt}{t} 
\newcommand{\OEt}{t} 
\newcommand{\Cp}{C_p}
\newcommand{\Ck}{C_k}
\DeclareMathOperator{\mylinsum}{lin}
\DeclareMathOperator{\mylcsum}{lc}
\newcommand{\linsum}{\bigoplus^{\mylinsum}}
\newcommand{\lcsum}{\bigoplus^{\mylcsum}}
\DeclareMathOperator{\orad}{orad}
\DeclareMathOperator{\torad}{torad}
\begin{document}
\mymaketitle
\begin{abstract}
	This paper studies ways to represent an ordered topological vector space as a space of continuous functions,
	extending the classical representation theorems of Kadison and Schaefer.
	Particular emphasis is put on the class of \emph{semisimple} spaces, consisting of those ordered topological vector spaces that admit an injective positive representation to a space of continuous functions.
	We show that this class forms a natural topological analogue of the \emph{regularly ordered} spaces defined by Schaefer in the 1950s,
	and is characterized by a large number of equivalent geometric, algebraic, and topological properties.
\end{abstract}

\section{Introduction}
\subsection{Outline}
A common technique in functional analysis and other branches of mathematics is to study an abstract space $E$ by studying its \emph{representations}, special types of maps from $E$ to concrete spaces of continuous functions or operators.
For instance, a commutative Banach algebra admits a norm-decreasing homomorphism to a space of continuous functions (the \emph{Gelfand representation}), and a $C^*$-algebra admits an injective $*$-homomorphism to an algebra of operators on a Hilbert space (the \emph{GNS representation}).
For ordered vector spaces, the most famous result in this direction is Kadison's representation theorem:

\begin{theorem K}[{Kadison \cite[Lemma 2.5]{Kadison-representation}}]
	\label{thm:Kadison-representation}
	Let $E$ be a \textup(real\textup) ordered vector space whose positive cone $E_+$ is Archimedean and contains an order unit $e$. If $E$ is equipped with the corresponding order unit norm $\lVert \:\cdot\: \rVert_e$, then $E$ is isometrically order isomorphic to a subspace of $C(\Omega)$, for some compact Hausdorff space $\Omega$.
\end{theorem K}

This representation is given by a concrete construction, and Kadison \cite{Kadison-representation} showed that his construction contains various known representation theorems as special cases, including results for real Banach spaces, real Banach algebras, and Archimedean Riesz spaces with an order unit.
The continuing relevance of Kadison's representation theorem can be seen, for instance, in the theory of operator systems (\cite{Paulsen-Tomforde}, \cite{Paulsen-Todorov-Tomforde}).

A more general result is Schaefer's representation theorem, which gives necessary and sufficient conditions for an ordered locally convex space $E$ to be topologically order isomorphic to a subspace of $\Ck(\Omega)$ for some locally compact Hausdorff space $\Omega$, where $\Ck(\Omega)$ denotes the space $C(\Omega)$ equipped with the topology of \emph{compact convergence} (i.e.~uniform convergence on compact subsets).

\begin{theorem S}[{Schaefer \cite[Theorem 5.1]{Schaefer-I}; see also \cite[Theorem V.4.4]{Schaefer}}]
	\label{thm:Schaefer-representation}
	Let $E$ be a \textup(real\textup) ordered locally convex space with positive cone $E_+$. Then there exists a locally compact Hausdorff space $\Omega$ such that $E$ is topologically order isomorphic to a subspace of $\Ck(\Omega)$ if and only if the positive cone $E_+$ is closed and normal.
\end{theorem S}

Kadison's and Schaefer's representation theorems both result in a topological order isomorphism of $E$ with a subspace of a space of continuous functions.
These results are very strong, but at the same time rather restrictive.
By comparison, the Gelfand representation of a commutative Banach algebra is not always a topological embedding (or even injective), so there appears to be room for a more general representation theory of ordered topological vector spaces.

In this paper, we will study more general representations of ordered topological vector spaces by relaxing the requirements on the representing operator $E \to \Ck(\Omega)$; instead of requiring a topological order embedding, we look for representations that are continuous and positive.

Our main result, \autoref{thm:intro:pos:semisimple} below, is a characterization of the class of semisimple ordered topological vector spaces, consisting of those spaces $E$ that admit an injective representation $E \to \Ck(\Omega)$ of the aforementioned kind (continuous and positive).
We will show that semisimplicity is characterized by many equivalent geometric, algebraic, and topological conditions.
Furthermore, we will give similar equivalent criteria for the existence of a continuous and \emph{bipositive} representation $E \to \Ck(\Omega)$, so that the positive cone of $E$ coincides with the \emph{pullback cone} inherited from $\Ck(\Omega)$.

Although Kadison's and Schaefer's representation theorems give the impression that order units and/or normality are somehow essential to the process, our results show that these requirements are only needed in order to obtain a topological embedding.
To obtain an injective positive (resp.~bipositive) representation $E \to \Ck(\Omega)$, all that is needed is that $E_+$ is contained in (resp.~is itself) a weakly closed proper cone.

Semisimple ordered topological vector spaces (as defined in this paper) are closely related to Schaefer's \emph{regularly ordered spaces}, which are given by the following equivalent conditions.

\begin{theorem S}[{Schaefer \cite[Proposition 1.7]{Schaefer-I}; see also \cite[Proposition II.1.29]{Peressini}}]
	\label{thm:Schaefer-regular}
	For a convex cone $E_+$ in a real vector space $E$, the following are equivalent:
	\begin{enumerate}[label=(\roman*)]
		\item The algebraic dual cone $E_+\algdual$ separates points on $E$;
		\item The closure of $E_+$ in the finest locally convex topology on $E$ is a proper cone;
		\item There is a locally convex topology on $E$ for which $\overline{E_+}$ is a proper cone;
		\item There is a locally convex topology on $E$ for which $E_+$ is normal.
	\end{enumerate}
\end{theorem S}

(For Riesz spaces, see also \cite[Theorem 23.15]{Kelley-Namioka}.)
If $E_+$ satisfies any one \textup(and therefore all\textup) of the properties from \autoref{thm:Schaefer-regular}, then $E_+$ is called \emph{regular}.
Semisimplicity is the topological analogue of regularity, and the equivalent characterizations of semisimple cones lead to new equivalent conditions for regularity, in addition to the criteria from \autoref{thm:Schaefer-regular}.

An application of semisimplicity will be given in the follow-up paper \cite{order-unitizations}, where we will show that a \emph{normed} ordered vector space $E$ admits an \emph{Archimedean order unitization} if and only if $E_+$ is semisimple.

\subsection{Main results}
Throughout this paper, all topological vector spaces will be over the real numbers.
The topological dual of a topological vector space $E$ is denoted $E\topdual$, and the weak\nobreakdash-$*$ topology on $E\topdual$ is denoted $\weakstar$.
For additional notation, see \mysecref{sec:notation}.

\subsubsection{A general theory of representations}
Let $\Omega$ be a topological space, and let $C(\Omega)$ denote the space of all continuous functions $\Omega \to \R$. Furthermore, let $\Cp(\Omega)$ and $\Ck(\Omega)$ denote the space $C(\Omega)$ equipped with the locally convex topologies of pointwise and compact convergence, respectively. If $\Omega$ is compact, then $\Ck(\Omega)$ carries the usual norm topology.

A \emph{representation} of the topological vector space $E$ is a continuous linear map $E \to F$, where $F$ is a locally convex space of functions. In this paper, $F$ will always be $\Cp(\Omega)$ or $\Ck(\Omega)$ for some topological space $\Omega$. When studying positive representations, we will understand these function spaces to be equipped with the cone of everywhere non-negative functions.

In \mysecref{sec:representation-lemma}, we set up a general theory of representations $E \to \Cp(\Omega)$. The following lemma, which we believe to be of independent interest, sums up the main results of \mysecref{sec:representation-lemma}.

\begin{AlphLemma}
	\label{lem:intro:representation-lemma}
	Let $E$ be a real topological vector space, and let $\Omega$ be a topological space.
	\begin{enumerate}[label=(\alph*),series=intro-representations]
		\item\label{itm:intro:repr-correspondence} There is a bijective correspondence $T \mapsto T^\ECt$ between representations $T : E \to \Cp(\Omega)$ and continuous functions $T^\ECt : \Omega \to E_\weakstar\topdual$, such that $T(x)(\omega) = T^\ECt(\omega)(x)$.
	\end{enumerate}
	Under the correspondence from \ref{itm:intro:repr-correspondence}, the following properties of a representation $T : E \to \Cp(\Omega)$ depend only on the image of $T^\ECt$ in $E_\weakstar\topdual$:
	\begin{enumerate}[resume*=intro-representations]
		\item A representation $T : E \to \Cp(\Omega)$ is injective if and only if $T^\ECt[\Omega]$ separates points on $E$;
		
		\item A representation $T : E \to \Cp(\Omega)$ is continuous as a map $E \to \Ck(\Omega)$ if and only if $T^\ECt[K]$ is equicontinuous for every compact subset $K \subseteq \Omega$. This is automatically the case if $E$ is a barrelled locally convex space \textup(in particular, if $E$ is a Fr\'echet space\textup).
	\end{enumerate}
	Furthermore, if $E$ is a preordered topological vector space with positive cone $E_+$, then:
	\begin{enumerate}[resume*=intro-representations]
		\item A representation $T : E \to \Cp(\Omega)$ is positive if and only if $T^\ECt[\Omega] \subseteq E_+\topdual$;
		
		\item A representation $T : E \to \Cp(\Omega)$ is bipositive if and only if $E_+$ is weakly closed and $E_+\topdual$ is the weak\nobreakdash-$*$ closed convex cone generated by $T^\ECt[\Omega]$.
	\end{enumerate}
\end{AlphLemma}

\smallskip
Proofs of the statements in \autoref{lem:intro:representation-lemma} will be given in \mysecref{subsec:transpose}.

The correspondence between representations $E \to \Cp(\Omega)$ and continuous functions $\Omega \to E_\weakstar\topdual$ greatly simplifies the task of choosing an appropriate representation of $E$.
Common choices of representations $E \to \Cp(\Omega)$ or $E \to \Ck(\Omega)$ will be discussed in \mysecref{subsec:explicit-constructions}.

\subsubsection{Equivalent definitions of semisimplicity}
The main goal of this paper is to classify all (pre)ordered topological vector spaces that admit an injective positive representation to a space of functions.
Using \autoref{lem:intro:representation-lemma} and a few standard results on ordered topological vector spaces, we obtain a wealth of equivalent geometric, algebraic, and topological characterizations.

\begin{AlphTheorem}[Criteria for semisimplicity]
	\label{thm:intro:pos:semisimple}
	For a convex cone $E_+ $ in a real topological vector space $E$, the following are equivalent:
	\begin{enumerate}[label=(\roman*)]
		\item\label{itm:intro:pos:dual-cone-separates-points} The topological dual cone $E_+\topdual$ separates points on $E$;
		\item\label{itm:intro:pos:weak-closure-is-proper} The topological dual space $E\topdual$ separates points and the weak closure $\overline{E_+}^{\,\weak}$ is a proper cone;
		\item\label{itm:intro:pos:intersection-closed-hyperplanes} The intersection of all closed supporting hyperplanes of $E_+$ is $\{0\}$;
		\item\label{itm:intro:pos:coarser-normality} There is a weaker \textup(i.e.~coarser\textup) locally convex topology on $E$ for which $E_+$ is normal;
		\item\label{itm:intro:pos:Cp-representation} There is a topological space $\Omega$ and an injective positive representation $E \to \Cp(\Omega)$;
		\item\label{itm:intro:pos:Ck-representation} There is a locally compact Hausdorff space $\Omega$ and an injective positive representation $E \to \Ck(\Omega)$.
	\end{enumerate}
	Furthermore, if $E$ is a normed space, then the topology in \myref{itm:intro:pos:coarser-normality} can be taken normable, and the space $\Omega$ in \myref{itm:intro:pos:Ck-representation} can be taken compact.
\end{AlphTheorem}

A cone satisfying any one (and therefore all) of the properties of \autoref{thm:intro:pos:semisimple} will be called (\emph{topologically}) \emph{semisimple}, in analogy with semisimple (commutative) Banach algebras.
Note that the semisimplicity of $E_+$ depends only on $E_+$ and on the dual pair $\langle E,E\topdual\rangle$, not on the topology of $E$ (use property \myref{itm:intro:pos:dual-cone-separates-points} or \myref{itm:intro:pos:weak-closure-is-proper}).

\autoref{thm:intro:pos:semisimple} can also be applied in a purely algebraic setting (i.e.~if $E$ is a vector space with no topology). Clearly $E_+$ is regular if and only if $E_+$ is semisimple for the $\sigma(E,E\algdual)$-topology, or any other topology compatible with the dual pair $\langle E,E\algdual\rangle$. This shows that semisimplicity is the topological analogue of regularity.\hair%
\footnote{The use of these terms is somewhat arbitrary. What is traditionally called \emph{regular} could just as well be called (\emph{algebraically}) \emph{semisimple}. We justify our terminology by drawing parallels with semisimple commutative Banach algebras, but there are similar parallels between regularly ordered spaces and Jacobson semisimple rings. (See \autoref{rmk:radicals} below.)}
Furthermore, \autoref{thm:intro:pos:semisimple} provides the following additional characterizations of regularity.

\begin{AlphCorollary}
	\label{cor:intro:pos:regular}
	For a convex cone $E_+$ in a real vector space $E$, the following are equivalent:
	\begin{enumerate}[label=(\roman*)]
		\item $E_+$ is regular \textup(in the sense of \autoref{thm:Schaefer-regular}\textup);
		\item The intersection of all supporting hyperplanes of $E_+$ is $\{0\}$;
		\item There is a topological space $\Omega$ and an injective positive linear map $E \to C(\Omega)$;
		\item There is a locally compact Hausdorff space $\Omega$ and an injective positive linear map $E \to C(\Omega)$.
	\end{enumerate}
\end{AlphCorollary}

\smallskip
\autoref{thm:intro:pos:semisimple} and \autoref{cor:intro:pos:regular} will be proved in \mysecref{sec:positive}.

\subsubsection{Bipositive representations}
The preceding results show exactly which spaces admit an injective positive representation $E \to \Cp(\Omega)$ or $E \to \Ck(\Omega)$. However, sometimes it is desirable to have something stronger, namely a bipositive representation, so that the positive cone of $E$ coincides with the cone inherited from the function space.

In \mysecref{sec:bipositive}, we will show that bipositive versions of \autoref{thm:intro:pos:semisimple} and \autoref{cor:intro:pos:regular} can be obtained by adding the requirement that $E_+$ is weakly closed.
These results can be seen as a relaxation of Schaefer's representation theorem (\autoref{thm:Schaefer-representation} above): we no longer require that $E_+$ is normal, but we only get a representation that is continuous (not a topological embedding).

\subsubsection{Hereditary properties and examples of semisimplicity}
In \mysecref{sec:stability}, we will show that semisimplicity is preserved by subspaces, products, and direct sums, but not by (proper) quotients or completions.
For semisimplicity in tensor products, see \cite{ordered-tensor-products-i}.

In \mysecref{sec:examples}, we give various examples and counterexamples related to semisimplicity and regularity.

\section{Notation}
\label{sec:notation}
Throughout this article, all vector spaces are over the real numbers, and all topological vector spaces are Hausdorff. Algebraic duals will be denoted as $E\algdual$ and topological duals as $E\topdual$. The weak\nobreakdash-$*$ topology on $E\topdual$ will be denoted as $E_\weakstar\topdual$. Similarly, if $E\topdual$ separates points on $E$, then the weak topology on $E$ will be denoted as $E_\weak$.

\subsubsection{Convex cones and preordered vector spaces}
Let $E$ be a real vector space. A (\emph{convex}) \emph{cone}\hair\footnote{Some authors call this a \emph{wedge}, and reserve the term \emph{cone} for what we call a \emph{proper cone}.} is a non-empty subset $\mywedge \subseteq E$ satisfying $\mywedge + \mywedge \subseteq \mywedge$ and $\lambda \mywedge \subseteq \mywedge$ for all $\lambda \in \R_{\geq 0}$. If $\mywedge$ is a convex cone, then $\lineal(\mywedge) := \mywedge \cap -\mywedge$ is a linear subspace of $E$, called the \emph{lineality space} of $\mywedge$. We say that $\mywedge$ is \emph{proper} if $\lineal(\mywedge) = \{0\}$.

If $M \subseteq E$ is a subset, then we denote by $\cone(M)$ the \emph{convex cone generated by $M$}; that is:
\[ \cone(M) := \left\{\left.\sum_{i=1}^k \lambda_i x_i \ \right\rvert \ k \in \N_0,\ \lambda_1,\ldots,\lambda_k \in \R_{\geq 0},\ x_1,\ldots,x_k \in M\right\}. \]

Convex cones in $E$ are in bijective correspondence with linear preorders on $E$. Under this correspondence, a convex cone is proper if and only if the associated preorder is a partial order.

In what follows, whenever we study a vector space $E$ and a convex cone $E_+ \subseteq E$, we will understand $E$ to be the preordered vector space with positive cone $E_+$.

\subsubsection{Positive and bipositive linear maps}
If $E$, $F$ are vector spaces and $E_+ \subseteq E$, $F_+ \subseteq F$ are convex cones, then a linear map $T : E \to F$ is called \emph{positive} if $T[E_+] \subseteq F_+$, and \emph{bipositive} if $E_+ = T^{-1}[F_+]$.

If $E_+$ is a proper cone, then a bipositive map $T : E \to F$ is automatically injective, because $\ker(T) = T^{-1}[\{0\}] \subseteq T^{-1}[F_+] = E_+$ is a subspace contained in $E_+$, which must therefore be $\{0\}$. Note however that this is not true in general: if $E/\lineal(E_+)$ is equipped with the quotient cone, then the canonical map $E \to E/\lineal(E_+)$ is bipositive, but not injective unless $E_+$ is a proper cone.

\subsubsection{Dual cones}
If $E_+ \subseteq E$ is a convex cone, then its \emph{algebraic dual cone} $E_+\algdual \subseteq E\algdual$ is the set of all positive linear functionals, and its \emph{topological dual cone} $E_+\topdual \subseteq E\topdual$ is the set of all \emph{continuous} positive linear functionals. If we equip $E\topdual$ with the weak\nobreakdash-$*$ topology, then the dual cone of the dual cone $E_+\topdual \subseteq E_\weakstar\topdual$ is the \emph{bipolar cone}
\[ E_+\topdualdual := \big\{x \in E \, : \, \langle x,\varphi\rangle \geq 0 \ \text{for all $\varphi \in E_+\topdual$}\big\}. \]
Using the (one-sided) bipolar theorem, one easily shows that $E_+\topdualdual = \overline{E_+}^{\,\weak}$. 
Furthermore, if $E$ is locally convex, then the weak closure and original closure of a convex set coincide, so we have
\[ E_+\topdualdual = \overline{E_+}^{\,\weak} = \overline{E_+} \qquad \text{($E$ locally convex, $E_+ \subseteq E$ a convex cone)}. \]

\section{Representations of topological vector spaces}
\label{sec:representation-lemma}
Before proving our main theorem, we study the general structure of representations.
Recall that we defined a \emph{representation} to be any continuous linear operator $E \to \Cp(\Omega)$ or $E \to \Ck(\Omega)$.
In this section, we will establish a bijective correspondence between representations $E \to \Cp(\Omega)$ and continuous maps $\Omega \to E_\weakstar\topdual$. Under this correspondence, the important properties of the representation (e.g.~injectivity, positivity) only depend on the image of $\Omega$ in $E\topdual$ (\autoref{lem:representation}).

Throughout this section, we assume that $E$ is a real\hair%
\footnote{The results from this section are also true over the complex field, but we have no use for this. For notational simplicity, we will stick to the assumption (made throughout this article) that the base field is $\R$.}
topological vector space. We do not assume that the topological dual $E\topdual$ separates points on $E$. Nevertheless, we denote the natural bilinear map $E \times E\topdual \to \R$ by $\langle \:\cdot\: , \:\cdot\: \rangle$, and we denote the weak\nobreakdash-$*$ topology on $E\topdual$ by $E_\weakstar\topdual$. (Note that $E$ automatically separates points on $E\topdual$, so $E_\weakstar\topdual$ is Hausdorff.)

Let $\Omega$ be a topological space.
For $\omega \in \Omega$, let $\pi_\omega : \Cp(\Omega) \to \R$ denote the point evaluation $f \mapsto f(\omega)$. Note that we have $\Cp(\Omega) \subseteq \R^\Omega$, and that the topology of $\Cp(\Omega)$ coincides with the subspace topology inherited from the product topology of $\R^\Omega$. In particular, a function $\phi : Y \to \Cp(\Omega)$ is continuous if and only if $\pi_\omega \circ \phi$ is continuous for all $\omega \in \Omega$.

\subsection{The transpose of a representation}
\label{subsec:transpose}

Given sets $A$ and $B$ and a function $f : A \times B \to \R$, we denote by $f^\ABt : B \times A \to \R$ the \emph{transpose} $f^\ABt(b)(a) = f(a)(b)$.
As a particular case, if $T : E \to \Cp(\Omega)$ is a representation, then $T$ defines a function $E \times \Omega \to \R$, which has a transpose $T^\ECt : \Omega \times E \to \R$.
Similarly, if $f : \Omega \to E_\weakstar\topdual$ is a continuous function, then $f$ defines a function $\Omega \times E \to \R$, which has a transpose $f^\OEt : E \times \Omega \to \R$.
Our goal is to show that transposition on $E \times \Omega$ restricts to a bijection between the representations $E \to \Cp(\Omega)$ and the continuous functions $\Omega \to E_\weakstar\topdual$.

If $T : E \to \Cp(\Omega)$ is a representation, then we interpret $T^\ECt$ as the map $\Omega \to E\topdual$, $\omega \mapsto \pi_\omega \circ T$, which is easily seen to be well-defined ($\pi_\omega \circ T$ is a continuous linear functional on $E$).
Similarly, if $f : \Omega \to E_\weakstar\topdual$ is a continuous function, then we interpret $f^\OEt$ as the map $E \to \Cp(\Omega)$, $x \mapsto (\omega \mapsto \langle x, f(\omega) \rangle)$. To see that $f^\OEt$ is well-defined, note that every $x \in E$ defines a weak\nobreakdash-$*$ continuous linear functional $\hat x := \langle x , \:\cdot\: \rangle$ on $E\topdual$, so $f^\OEt(x) = \hat x \circ f$ is a continuous map $\Omega \to \R$.

\begin{proposition}
	Let $E$ be a real topological vector space and let $\Omega$ be a topological space.
	\begin{enumerate}[label=(\alph*)]
		\item If $T : E \to \Cp(\Omega)$ is a representation, then $T^\ECt : \Omega \to E_\weakstar\topdual$ is continuous, and $(T^\ECt)^\OEt = T$.
		
		\item If $f : \Omega \to E_\weakstar\topdual$ is continuous, then $f^\OEt : E \to \Cp(\Omega)$ is a representation, and $(f^\OEt)^\ECt = f$.
	\end{enumerate}
\end{proposition}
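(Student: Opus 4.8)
The plan is to verify both halves by unwinding the definitions and using the two characterizations of continuity already recorded in this section: a map $\phi : Y \to \Cp(\Omega)$ is continuous iff $\pi_\omega \circ \phi$ is continuous for every $\omega \in \Omega$, and a map $\psi : Y \to E_\weakstar\topdual$ is continuous iff $y \mapsto \langle x, \psi(y)\rangle$ is continuous for every $x \in E$ (the weak-$*$ topology being the topology of pointwise convergence on $E$). Everything else is a formal bookkeeping check that the two transposition operations are mutually inverse.

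For part (a), suppose $T : E \to \Cp(\Omega)$ is a representation. First I would note that $T^\ECt$ is already known to be well-defined as a map $\Omega \to E\topdual$, $\omega \mapsto \pi_\omega \circ T$. To see it is continuous into $E_\weakstar\topdual$, fix $x \in E$; then $\omega \mapsto \langle x, T^\ECt(\omega)\rangle = (\pi_\omega \circ T)(x) = T(x)(\omega)$ is continuous because $T(x) \in C(\Omega)$. By the weak-$*$ criterion, $T^\ECt$ is continuous. Then the identity $(T^\ECt)^\OEt = T$ is immediate from the definitions: for all $x \in E$ and $\omega \in \Omega$ one has $(T^\ECt)^\OEt(x)(\omega) = \langle x, T^\ECt(\omega)\rangle = T(x)(\omega)$.

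For part (b), suppose $f : \Omega \to E_\weakstar\topdual$ is continuous. As already observed, $f^\OEt(x) = \hat x \circ f$ is a continuous real-valued function on $\Omega$, so $f^\OEt$ maps into $\Cp(\Omega)$, and it is linear because $x \mapsto \hat x$ and each $\langle \:\cdot\:, f(\omega)\rangle$ are linear. To check that $f^\OEt : E \to \Cp(\Omega)$ is continuous, I would use the first criterion: for each $\omega \in \Omega$ the map $\pi_\omega \circ f^\OEt : E \to \R$ sends $x$ to $\langle x, f(\omega)\rangle$, which is continuous since $f(\omega) \in E\topdual$. Hence $f^\OEt$ is a representation. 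Finally, $(f^\OEt)^\ECt(\omega) = \pi_\omega \circ f^\OEt$ acts on $x \in E$ by $x \mapsto \langle x, f(\omega)\rangle$, so $\pi_\omega \circ f^\OEt = f(\omega)$ as elements of $E\topdual$; thus $(f^\OEt)^\ECt = f$.

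I do not expect a genuine obstacle here: the statement is essentially the assertion that the exponential-law transpose is continuity-preserving in this setting, and the only point requiring a moment's care is invoking the correct continuity criterion on each side (pointwise convergence for $\Cp(\Omega)$, pointwise-on-$E$ convergence for $E_\weakstar\topdual$). Together, parts (a) and (b) show that $T \mapsto T^\ECt$ and $f \mapsto f^\OEt$ are well-defined and mutually inverse, which is exactly statement \ref{itm:intro:repr-correspondence} of \autoref{lem:intro:representation-lemma}.
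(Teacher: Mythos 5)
Your proof is correct and follows essentially the same route as the paper: both sides are handled by checking continuity against the defining families of coordinate functionals (the point evaluations $\pi_\omega$ for $\Cp(\Omega)$ and the maps $\hat x$ for $E_\weakstar\topdual$), and the identities $(T^\ECt)^\OEt = T$ and $(f^\OEt)^\ECt = f$ drop out of the same computations. No gaps; this matches the paper's argument.
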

\begin{proof}\ \par
	\begin{enumerate}[label=(\alph*)]
		\item Since $E_\weakstar\topdual$ carries the initial topology for the family $\{\hat x \, : \, x \in E\}$, in order to prove that $T^\ECt : \Omega \to E_\weakstar\topdual$ is continuous, it suffices to show that $\hat x \circ T^\ECt$ is continuous for every $x \in E$.
		But $\hat x \circ T^\ECt$ is given by $\omega \mapsto \langle x , \pi_\omega \circ T\rangle = (\pi_\omega \circ T)(x) = T(x)(\omega)$, which is continuous since $T(x) \in \Cp(\Omega)$. This also shows that $(T^\ECt)^\OEt(x) = \hat x \circ T^\ECt = T(x)$, hence $(T^\ECt)^\OEt = T$.
		
		\item It is easy to see that $f^\OEt$ is linear. To prove continuity, it suffices to show that $\pi_\omega \circ f^\OEt$ is continuous for every $\omega \in \Omega$, since $\Cp(\Omega)$ carries the initial topology for the family $\{\pi_\omega \, : \, \omega \in \Omega\}$. But $\pi_\omega \circ f^\OEt$ is given by $x \mapsto \langle x , f(\omega)\rangle$, so that $\pi_\omega \circ f^\OEt = f(\omega)$ is a continuous linear functional on $E$. This also shows that $(f^\OEt)^\ECt(\omega) = \pi_\omega \circ f^\OEt = f(\omega)$, hence $(f^\OEt)^\ECt = f$.
		\qedhere
	\end{enumerate}
\end{proof}
\begin{corollary}
	Let $E$ be a real topological vector space, and let $\Omega$ be a topological space. Then the operations $T \mapsto T^\ECt$ and $f \mapsto f^\OEt$ define a bijective correspondence between the representations $E \to \Cp(\Omega)$ and the continuous functions $\Omega \to E_\weakstar\topdual$.
\end{corollary}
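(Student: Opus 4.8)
The plan is to read the corollary straight off the preceding Proposition, which already contains all the substance; what remains is a purely formal repackaging of its two parts as a bijective correspondence. Write $\mathcal R$ for the collection of representations $E \to \Cp(\Omega)$ and $\mathcal C$ for the collection of continuous functions $\Omega \to E_\weakstar\topdual$.

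First I would observe that part (a) of the Proposition says exactly that $T \mapsto T^\ECt$ is a well-defined map $\mathcal R \to \mathcal C$: whenever $T \in \mathcal R$, it asserts that $T^\ECt$ is continuous into $E_\weakstar\topdual$, i.e.\ that $T^\ECt \in \mathcal C$. Symmetrically, part (b) says that $f \mapsto f^\OEt$ is a well-defined map $\mathcal C \to \mathcal R$. So both assignments are defined in the two directions claimed by the corollary.

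Next I would verify that the two maps are mutually inverse. Starting from $T \in \mathcal R$, forming $T^\ECt$ and then $(T^\ECt)^\OEt$ returns $T$ by the identity $(T^\ECt)^\OEt = T$ in part (a); hence the composite $\mathcal R \to \mathcal C \to \mathcal R$ is the identity. Starting instead from $f \in \mathcal C$, the identity $(f^\OEt)^\ECt = f$ in part (b) shows that the composite $\mathcal C \to \mathcal R \to \mathcal C$ is the identity. Therefore each of the two maps is a bijection, and each is the inverse of the other, which is precisely the assertion of the corollary.

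There is no real obstacle here: the corollary is a formal consequence of the Proposition, and the proof is a couple of lines. The only point worth flagging is bookkeeping — namely that the $T^\ECt$ and $f^\OEt$ appearing in parts (a) and (b) are genuinely the restrictions of the single transposition operation on functions $E \times \Omega \to \R$ introduced at the start of \mysecref{subsec:transpose} (with $T^\ECt$ read as $\omega \mapsto \pi_\omega \circ T$ and $f^\OEt$ as $x \mapsto (\omega \mapsto \langle x, f(\omega)\rangle)$), so that ``$T \mapsto T^\ECt$'' and ``$f \mapsto f^\OEt$'' really are the two directions of one and the same correspondence; but this was already built into how those maps were defined.
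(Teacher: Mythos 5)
Your proof is correct and is exactly the argument the paper intends: the corollary is stated without a separate proof precisely because it follows formally from parts (a) and (b) of the preceding proposition, via well-definedness in both directions and the identities $(T^\ECt)^\OEt = T$ and $(f^\OEt)^\ECt = f$. Your bookkeeping remark about both maps being restrictions of the single transposition operation matches the paper's setup as well.
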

\begin{remark}
	Going back to our initial interpretation of the transpose, one easily shows that a function $f : E \times \Omega \to \R$ is given by a representation $E \to \Cp(\Omega)$ (or by a continuous function $\Omega \to E_\weakstar\topdual$) if and only if $f$ is separately continuous (in both coordinates) and linear in $E$. This is an asymmetric analogue of the fact that the separately continuous bilinear forms $E \times F \to \R$ correspond to the continuous linear operators $E \to F_\weakstar\topdual$ or $F \to E_\weakstar\topdual$ (e.g.~\cite[\S 40.1.(2')]{Kothe-II}).
\end{remark}

The following lemma shows that the most important qualitative properties of a representation $T : E \to \Cp(\Omega)$ depend only on the image of $T^\ECt$ in $E\topdual$.

\begin{lemma}
	\label{lem:representation}
	Let $E$ be a real topological vector space, $\Omega$ a topological space, and $T : E \to \Cp(\Omega)$ a representation. Then:
	\begin{enumerate}[label=(\alph*),series=representations]
		\item\label{itm:repr:injective} $T$ is injective if and only if $T^\ECt[\Omega]$ separates points on $E$;
		
		\item\label{itm:repr:Ck-continuous} $T$ is continuous as a map $E \to \Ck(\Omega)$ if and only if $T^\ECt[K]$ is equicontinuous for every compact subset $K \subseteq \Omega$.
	\end{enumerate}
	If, additionally, $E$ is preordered with positive cone $E_+$, then:
	\begin{enumerate}[label=(\alph*),resume=representations]
		\item\label{itm:repr:positive} $T$ is positive if and only if $T^\ECt[\Omega] \subseteq E_+\topdual$;
		
		\item\label{itm:repr:bipositive} $T$ is bipositive if and only if $E_+$ is weakly closed and $E_+\topdual$ is the weak\nobreakdash-$*$ closed convex cone generated by $T^\ECt[\Omega]$.
	\end{enumerate}
\end{lemma}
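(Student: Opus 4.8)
The plan is to reduce all four equivalences to the identity $T(x)(\omega) = \langle x, T^\ECt(\omega)\rangle$ from the preceding proposition, combined with elementary polarity in the dual pair $\langle E, E\topdual\rangle$. Write $\Phi := T^\ECt[\Omega] \subseteq E\topdual$. Reading off the pointwise descriptions of the zero function and of the positive cone of $\Cp(\Omega)$, one immediately gets $\ker T = \{x \in E : \langle x,\varphi\rangle = 0 \text{ for all } \varphi \in \Phi\}$ and $T^{-1}[\Cp(\Omega)_+] = \{x \in E : \langle x,\varphi\rangle \ge 0 \text{ for all } \varphi \in \Phi\}$. Then \myref{itm:repr:injective} is at hand: $\ker T$ is the annihilator of $\Phi$ in $E$, which is $\{0\}$ exactly when no nonzero vector is annihilated by all of $\Phi$, i.e.\ (by linearity) when $\Phi$ separates points. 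And \myref{itm:repr:positive} is equally quick: $T$ is positive iff $\langle x, T^\ECt(\omega)\rangle \ge 0$ for every $x \in E_+$ and $\omega \in \Omega$, i.e.\ each $T^\ECt(\omega)$ is a positive functional; since $T^\ECt(\omega)$ already lies in $E\topdual$, this says exactly $\Phi \subseteq E_+\topdual$.

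For \myref{itm:repr:Ck-continuous}, recall that the topology of $\Ck(\Omega)$ is generated by the seminorms $p_K(f) := \sup_{\omega \in K}|f(\omega)|$ for $K \subseteq \Omega$ compact, so that (as $T$ is linear) $T : E \to \Ck(\Omega)$ is continuous if and only if every $p_K \circ T$ is a continuous seminorm on $E$. Here $p_K(T(x)) = \sup_{\varphi \in T^\ECt[K]}|\langle x, \varphi\rangle|$, and a subset of $E\topdual$ is equicontinuous precisely when its supremum seminorm $x \mapsto \sup_\varphi |\langle x,\varphi\rangle|$ is a continuous seminorm on $E$ (equivalently, the set is contained in the polar of some $0$-neighbourhood). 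Combining these two facts yields the equivalence.

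The substantive part is \myref{itm:repr:bipositive}. Set $K := \overline{\cone(\Phi)}^{\,\weakstar}$, the weak\nobreakdash-$*$ closed convex cone generated by $\Phi$, and put $K^\top := \{x \in E : \langle x,\varphi\rangle \ge 0 \text{ for all } \varphi \in K\}$. Since each evaluation $\hat x$ is weak\nobreakdash-$*$ continuous, enlarging $\Phi$ to $\cone(\Phi)$ and then to its weak\nobreakdash-$*$ closure $K$ does not change the set of $x$ satisfying $\langle x,\varphi\rangle \ge 0$ throughout; hence $T^{-1}[\Cp(\Omega)_+] = K^\top$. Being an intersection of the weakly closed sets $\{x : \langle x,\varphi\rangle \ge 0\}$, $\varphi \in K$, the cone $K^\top$ is weakly closed. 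Consequently, if $T$ is bipositive then $E_+ = K^\top$ is weakly closed, and the one-sided bipolar theorem applied to the cone $K \subseteq E\topdual$ in the pairing $\langle E, E\topdual\rangle$ (valid since $E$ always separates points on $E\topdual$) shows that the dual cone of $E_+ = K^\top$ equals $\overline{K}^{\,\weakstar} = K$, i.e.\ $E_+\topdual = K$. Conversely, suppose $E_+$ is weakly closed and $E_+\topdual = K$. Then $K^\top = \{x : \langle x,\varphi\rangle \ge 0 \text{ for all } \varphi \in E_+\topdual\} = E_+\topdualdual$, which by the identity $E_+\topdualdual = \overline{E_+}^{\,\weak}$ recalled in \mysecref{sec:notation} equals $\overline{E_+}^{\,\weak} = E_+$; so $T^{-1}[\Cp(\Omega)_+] = K^\top = E_+$ and $T$ is bipositive.

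I expect \myref{itm:repr:bipositive} to be the only step requiring genuine care: it means invoking the bipolar theorem on both sides of the pairing $\langle E, E\topdual\rangle$ while keeping in mind that $E\topdual$ is not assumed to separate points on $E$ — so that "$E_+$ weakly closed" refers to a possibly non-Hausdorff topology — whereas the separation actually used inside the bipolar theorem (that $E$ separates points on $E\topdual$) is automatic. The remaining parts \myref{itm:repr:injective}, \myref{itm:repr:Ck-continuous}, and \myref{itm:repr:positive} are routine once the above descriptions of $\ker T$ and $T^{-1}[\Cp(\Omega)_+]$ are in hand.
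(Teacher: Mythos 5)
Your proposal is correct and follows essentially the same route as the paper: parts \myref{itm:repr:injective} and \myref{itm:repr:positive} via the identity $T(x)(\omega)=\langle x,T^\ECt(\omega)\rangle$, part \myref{itm:repr:Ck-continuous} via the seminorms $p_K$ (your polar/continuous-seminorm phrasing is just a repackaging of the paper's $\varepsilon$--neighbourhood formulation of equicontinuity), and part \myref{itm:repr:bipositive} by identifying $T^{-1}[\Cp(\Omega)_+]$ as the dual cone of $\cone(T^\ECt[\Omega])$ and applying the (one-sided) bipolar theorem. Your write-up merely spells out the bipolar step in both directions, which the paper compresses into one sentence, and correctly notes that only the separation of points on $E\topdual$ by $E$ is needed.
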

\begin{proof}\ \par
	\begin{enumerate}[label=(\alph*)]
		\item This follows from the identity
		\[ \ker(T) = \big\{x \in E \, : \, T(x)(\omega) = 0\ \text{for all $\omega \in \Omega$}\big\} = \bigcap_{\omega \in \Omega} \ker(\pi_\omega \circ T) = \bigcap_{\omega \in \Omega} \ker(T^\ECt(\omega)). \]
		
		\item The topology of $\Ck(\Omega)$ is given by the family of seminorms $\{\lVert \:\cdot\: \rVert_K \, : \, K \subseteq \Omega\ \text{compact}\}$, where $\lVert g\rVert_K := \max_{k \in K} |g(k)|$. Thus, $T$ is continuous as a map $E \to \Ck(\Omega)$ if and only if for every compact subset $K \subseteq \Omega$ and every real number $\varepsilon > 0$ there is a $0$-neighbourhood $U_{K,\varepsilon} \subseteq E$ such that
		\begin{align}
			\lVert T(u)\rVert_K < \varepsilon,\qquad &\text{for all $u\in U_{K,\varepsilon}$}. \label{eqn:repr:pf:Ck-continuous}
			\intertext{Since $K$ is compact, the maximum $\max_{k\in K} |T(u)(k)|$ is attained, so \eqref{eqn:repr:pf:Ck-continuous} is equivalent to}
			\langle u , T^\ECt(k) \rangle = T(u)(k) \in (-\varepsilon,\varepsilon),\qquad &\text{for all $u\in U_{K,\varepsilon}$ and all $k \in K$}. \tag{\ref*{eqn:repr:pf:Ck-continuous}'} \label{eqn:repr:pf:equicontinuous}
		\end{align}
		The result follows, since $T^\ECt[K]$ is equicontinuous if and only if for every real number $\varepsilon > 0$ there is a $0$-neighbourhood $U_{K,\varepsilon} \subseteq E$ such that \eqref{eqn:repr:pf:equicontinuous} is met.

		\item Since the ordering of $\Cp(\Omega)$ is pointwise, $T$ is positive if and only if $\pi_\omega \circ T$ is positive for every $\omega \in \Omega$. Since $T^\ECt(\omega) = \pi_\omega \circ T$ (by definition), the claim follows.
		
		\item We have
		\begin{align*}
			T^{-1}[\Cp(\Omega)_+] &= \big\{x \in E \, : \, T(x)(\omega) \geq 0 \ \text{for all $\omega \in \Omega$}\big\}\\\noalign{\smallskip}
			&= \big\{x \in E \, : \, \langle x , T^\ECt(\omega) \rangle \geq 0\ \text{for all $\omega \in \Omega$}\big\}\\\noalign{\smallskip}
			&= \big\{x \in E \, : \, \langle x , \varphi \rangle \geq 0 \ \text{for all $\varphi \in \cone(T^\ECt[\Omega])$}\big\},
		\end{align*}
		for if $x \in E$ is non-negative on $T^\ECt[\Omega]$ then it is automatically non-negative on $\cone(T^\ECt[\Omega])$.
		It follows that $T^{-1}[\Cp(\Omega)_+] \subseteq E$ is the dual cone of $\cone(T^\ECt[\Omega]) \subseteq E_\weakstar\topdual$. Consequently, by the bipolar theorem, we have $E_+ = T^{-1}[\Cp(\Omega)_+]$ if and only if $E_+$ is weakly closed and
		\[ E_+\topdual = \overline{\cone(T^\ECt[\Omega])}^{\,\weakstar}. \qedhere \]
	\end{enumerate}
\end{proof}

\begin{remark}
	In \myautoref{lem:representation}{itm:repr:Ck-continuous}, note that $T^\ECt[\Omega]$ is automatically weak\nobreakdash-$*$ compact if $K \subseteq \Omega$ is compact. Equicontinuity is only slightly stronger than weak\nobreakdash-$*$ compactness. In fact, for every subset of $E\topdual$ one has
	\[ \text{equicontinuous} \implies \text{weak-$*$ relatively compact} \implies \text{weak-$*$ bounded}. \]
	The first implication follows from the Alaoglu--Bourbaki theorem (see \cite[Corollary III.4.3]{Schaefer}), and the second follows since every (subset of a) compact set in a topological vector space is bounded (cf.~\cite[Theorem 1.15(b)]{Rudin}). 
	
	A locally convex space $E$ is called \emph{barrelled} if every weak\nobreakdash-$*$ bounded subset of $E\topdual$ is equicontinuous, so that all three of the aforementioned classes coincide. The class of barrelled spaces includes all Fr\'echet spaces (in particular, all Banach spaces), but not all normed spaces.
	
	If $E$ is barrelled, then every $T^\ECt[K]$ ($K \subseteq \Omega$ compact) is weak\nobreakdash-$*$ compact and therefore equicontinuous, so here \myautoref{lem:representation}{itm:repr:Ck-continuous} has the following immediate consequence:
\end{remark}

\begin{corollary}
	If $E$ is \textup(locally convex and\textup) barrelled, then every representation $T : E \to \Cp(\Omega)$ is automatically continuous $E \to \Ck(\Omega)$.
\end{corollary}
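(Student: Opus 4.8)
The plan is to deduce this directly from the equicontinuity criterion in \myautoref{lem:representation}{itm:repr:Ck-continuous}, which says that a representation $T : E \to \Cp(\Omega)$ is continuous as a map $E \to \Ck(\Omega)$ precisely when $T^\ECt[K]$ is equicontinuous for every compact $K \subseteq \Omega$. So I would fix an arbitrary compact subset $K \subseteq \Omega$ and aim to show that $T^\ECt[K] \subseteq E\topdual$ is equicontinuous.

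First I would recall from the Proposition that the transpose $T^\ECt : \Omega \to E_\weakstar\topdual$ of the representation $T$ is continuous. Hence $T^\ECt[K]$, being the image of a compact set under a continuous map, is weak\nobreakdash-$*$ compact in $E\topdual$. Since every compact subset of a topological vector space is bounded, $T^\ECt[K]$ is in particular weak\nobreakdash-$*$ bounded. Now I would invoke the hypothesis that $E$ is barrelled, i.e.~that every weak\nobreakdash-$*$ bounded subset of $E\topdual$ is equicontinuous; applied to $T^\ECt[K]$ this gives the required equicontinuity. As $K$ was an arbitrary compact subset of $\Omega$, \myautoref{lem:representation}{itm:repr:Ck-continuous} then yields that $T$ is continuous as a map $E \to \Ck(\Omega)$.

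I do not expect any real obstacle: the statement is a formal consequence of \autoref{lem:representation} together with the chain of implications ``equicontinuous $\Leftarrow$ weak\nobreakdash-$*$ bounded'' that characterizes barrelled spaces (and which the preceding remark has already spelled out). The only point that deserves a line of justification is that $T^\ECt[K]$ is weak\nobreakdash-$*$ bounded, which I would handle via the two facts just mentioned: continuity of $T^\ECt$ makes $T^\ECt[K]$ weak\nobreakdash-$*$ compact, and compact sets in a topological vector space are bounded.
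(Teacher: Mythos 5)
Your argument is correct and is essentially the paper's own: the remark preceding the corollary makes exactly this chain (continuity of $T^\ECt$ gives weak\nobreakdash-$*$ compactness of $T^\ECt[K]$, hence weak\nobreakdash-$*$ boundedness, hence equicontinuity by barrelledness), and then \myautoref{lem:representation}{itm:repr:Ck-continuous} finishes the proof. No differences worth noting.
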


This is essentially a special case of the generalized Banach--Steinhaus theorem for barrelled spaces (e.g.~\cite[Theorem III.4.2]{Schaefer} or \cite[Theorem 4.16]{Osborne}).

\subsection{Explicit constructions of representations}
\label{subsec:explicit-constructions}
Assume now that only a topological vector space $E$ is given. We discuss some of the common choices of a space $\Omega$ and a representation $E \to \Cp(\Omega)$ or $E \to \Ck(\Omega)$.

\subsubsection{Pointwise convergence on a subset $\Omega \subseteq E\topdual$ with the relative weak\protect\nobreakdash-$*$ topology}
\autoref{lem:representation} shows that the properties of the representation only depend on the image of $\Omega$ in $E\topdual$. So if only a representation $E \to \Cp(\Omega)$ is desired, then one might simply take $\Omega$ to be a subset of $E\topdual$ with the relative weak\nobreakdash-$*$ topology.

\subsubsection{Compact convergence on a subset $\Omega_\discr \subseteq E\topdual$ with the discrete topology}
A simple way to obtain representations $E \to \Ck(\Omega)$ with $\Omega$ locally compact is the following: take some subset $\Omega \subseteq E\topdual$, and let $\Omega_\discr$ denote $\Omega$ equipped with the discrete topology. Then $\Omega_\discr$ is locally compact, and the compact subsets of $\Omega_\discr$ are precisely the finite sets, so $E \to \Ck(\Omega_\discr)$ is continuous by \myautoref{lem:representation}{itm:repr:Ck-continuous}. However, $\Ck(\Omega_\discr)$ is simply $\R^\Omega$ with the product topology, so this representation is not particularly useful. In fact, it factors as $E \to \Cp(\Omega_\weakstar) \to \Ck(\Omega_\discr) \cong \R^\Omega$.

\subsubsection{Compact convergence on a disjoint union of weak\protect\nobreakdash-$*$ closed equicontinuous sets}
To obtain representations $E \to \Ck(\Omega)$ with a coarser (non-discrete) topology on $\Omega$ and a finer (non-pointwise) topology on $\Ck(\Omega)$, it is more common to use the following method.

\begin{lemma}
	\label{lem:Ck-representation}
	Let $\{N_\alpha\}_{\alpha \in A}$ be a collection of weak\nobreakdash-$*$ closed equicontinuous subsets of $E\topdual$. Then the disjoint union $\Omega := \bigsqcup_{\alpha \in A} N_\alpha$ is locally compact, every compact subset $K \subseteq \Omega$ is contained in the disjoint union of finitely many $N_\alpha$, and the natural map $E \to \Ck(\Omega)$ is continuous.
\end{lemma}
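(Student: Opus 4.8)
The plan is to treat the three assertions separately, drawing only on elementary properties of the disjoint union (coproduct) topology together with the Alaoglu--Bourbaki theorem and \myautoref{lem:representation}{itm:repr:Ck-continuous}.

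First I would observe that each $N_\alpha$ is weak\nobreakdash-$*$ \emph{compact}: being equicontinuous it is weak\nobreakdash-$*$ relatively compact by Alaoglu--Bourbaki (as recalled in the remark following \autoref{lem:representation}), and it is weak\nobreakdash-$*$ closed by hypothesis. In the disjoint union $\Omega = \bigsqcup_{\alpha \in A} N_\alpha$, each $N_\alpha$ (with its relative weak\nobreakdash-$*$ topology) is a clopen subspace, and $\Omega$ is Hausdorff because each $N_\alpha$ is. Local compactness is then immediate: for $\omega \in \Omega$, the unique $N_\alpha$ containing $\omega$ is a compact open neighbourhood of $\omega$ in $\Omega$. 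For the second assertion, note that $\{N_\alpha\}_{\alpha \in A}$ is a cover of $\Omega$ by pairwise disjoint open sets; if $K \subseteq \Omega$ is compact, then $\{K \cap N_\alpha\}_{\alpha \in A}$ is a partition of $K$ into relatively open pieces, so only finitely many of them, say those indexed by $\alpha_1, \ldots, \alpha_n$, are non-empty, whence $K \subseteq N_{\alpha_1} \sqcup \cdots \sqcup N_{\alpha_n}$.

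Finally, for continuity of the natural map, I would identify it with $\iota^\OEt : E \to \Cp(\Omega)$, where $\iota : \Omega \to E_\weakstar\topdual$ is the tautological map sending the copy of $\omega$ inside $N_\alpha$ to $\omega \in E\topdual$. Since $\Omega$ carries the final topology of the disjoint union and the restriction of $\iota$ to each $N_\alpha$ is the continuous inclusion $N_\alpha \hookrightarrow E_\weakstar\topdual$, the map $\iota$ is continuous, so by the results of \mysecref{subsec:transpose} the transpose $\iota^\OEt$ is a representation $E \to \Cp(\Omega)$. By \myautoref{lem:representation}{itm:repr:Ck-continuous} it is continuous as a map into $\Ck(\Omega)$ precisely when $\iota[K]$ is equicontinuous for every compact $K \subseteq \Omega$; by the previous paragraph $\iota[K] \subseteq N_{\alpha_1} \cup \cdots \cup N_{\alpha_n}$, and a finite union of equicontinuous sets is equicontinuous (intersect the finitely many witnessing $0$\nobreakdash-neighbourhoods), as is any subset of such a union.

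I do not expect a genuine obstacle here: the argument is pure point-set topology once the two external inputs are in hand (Alaoglu--Bourbaki, to make the $N_\alpha$ compact, and \myautoref{lem:representation}{itm:repr:Ck-continuous}, to reduce $\Ck$\nobreakdash-continuity to equicontinuity on compacta). The only point deserving a moment's care is the identification of the ``natural map'' with $\iota^\OEt$ and the verification that $\iota$ is continuous out of the coproduct, which is just the universal property of the disjoint union topology.
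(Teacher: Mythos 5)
Your proposal is correct and follows essentially the same route as the paper: Alaoglu--Bourbaki to make each $N_\alpha$ weak\nobreakdash-$*$ compact, the clopen-pieces argument for local compactness and for compacta meeting only finitely many $N_\alpha$, and \myautoref{lem:representation}{itm:repr:Ck-continuous} applied to the transpose of the natural map $\Omega \to E_\weakstar\topdual$. The only difference is that you spell out explicitly that a finite union of equicontinuous sets is equicontinuous, which the paper leaves implicit.
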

\begin{proof}
	It follows from the Alaoglu--Bourbaki theorem that equicontinuous subsets in $E\topdual$ are relatively weak\nobreakdash-$*$ compact (e.g.~\cite[Corollary III.4.3]{Schaefer}), so every $N_\alpha$ is weak\nobreakdash-$*$ compact.
	
	For every $\alpha \in A$, let $\Omega_\alpha \subseteq \Omega$ denote the image of the canonical embedding $N_\alpha \to \Omega$. Then $\Omega_\alpha$ is a compact neighbourhood of every point $\omega \in \Omega_\alpha$, so $\Omega$ is locally compact. Furthermore, $\Omega = \bigcup_{\alpha \in A} \Omega_\alpha$ is an open cover of $\Omega$, so every compact subset $K \subseteq \Omega$ is contained in the union of finitely many $\Omega_\alpha$.
	
	The corresponding representation is $f^\OEt$, where $f : \Omega \to E_\weakstar\topdual$ is the natural map identifying every $\Omega_\alpha \subseteq \Omega$ with $N_\alpha \subseteq E\topdual$. If $K \subseteq \Omega$ is compact, then $f[K]$ is contained in the union of finitely many $N_\alpha$, hence equicontinuous. It follows from \myautoref{lem:representation}{itm:repr:Ck-continuous} that $E \to \Ck(\Omega)$ is continuous.
\end{proof}

If the only objective is to obtain a locally compact space $\Omega$ and a representation $E \to \Ck(\Omega)$, then the simpler construction $E \to \Ck(\Omega_\discr) \cong \R^\Omega$ outlined above can be used.
The real benefit of the more advanced construction from \autoref{lem:Ck-representation} is this:
\begin{proposition}
	\label{prop:top-emb}
	Let $\{N_\alpha\}_{\alpha \in A}$ be as in \autoref{lem:Ck-representation}. If the topology of $E$ coincides with the \textup(locally convex\textup) topology of uniform convergence on the sets $\{N_\alpha\}_{\alpha \in A}$, then the corresponding representation $E \to \Ck(\Omega)$ is a topological embedding.
\end{proposition}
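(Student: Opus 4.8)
\textit{Proof proposal.} The plan is to show directly that the representation $T := f^\OEt : E \to \Ck(\Omega)$ (where $f$ identifies each canonical copy $\Omega_\alpha \subseteq \Omega$ with $N_\alpha \subseteq E\topdual$, as in the proof of \autoref{lem:Ck-representation}) is injective and that the topology of $E$ coincides with the initial topology induced by $T$ from $\Ck(\Omega)$; together these say precisely that $T$ is a topological embedding.

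For $\alpha \in A$ write $p_{N_\alpha}(x) := \sup_{\varphi \in N_\alpha}|\langle x,\varphi\rangle|$ for the seminorm of uniform convergence on $N_\alpha$; this is finite because equicontinuous sets are weak\nobreakdash-$*$ bounded, and by hypothesis the family $\{p_{N_\alpha}\}_{\alpha \in A}$ generates the topology of $E$. Since $E$ is Hausdorff we get $\bigcap_{\alpha} \ker p_{N_\alpha} = \{0\}$, i.e.\ $T^\ECt[\Omega] = \bigcup_\alpha N_\alpha$ separates points on $E$, so $T$ is injective by \myautoref{lem:representation}{itm:repr:injective}.

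It remains to compare seminorms. For $x \in E$ and $\omega \in \Omega_\alpha$ one has $T(x)(\omega) = \langle x, f(\omega)\rangle$. On the one hand, each $\Omega_\alpha$ is compact (it is homeomorphic to the weak\nobreakdash-$*$ compact set $N_\alpha$, cf.\ the proof of \autoref{lem:Ck-representation}), and $\lVert T(x)\rVert_{\Omega_\alpha} = \sup_{\varphi \in N_\alpha}|\langle x,\varphi\rangle| = p_{N_\alpha}(x)$; hence each $p_{N_\alpha}$ is one of the seminorms $x \mapsto \lVert T(x)\rVert_K$ with $K \subseteq \Omega$ compact. On the other hand, by \autoref{lem:Ck-representation} every compact $K \subseteq \Omega$ lies in $\Omega_{\alpha_1} \cup \cdots \cup \Omega_{\alpha_n}$ for finitely many indices, so $\lVert T(x)\rVert_K \leq \max_{1 \leq i \leq n} p_{N_{\alpha_i}}(x)$. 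Thus the two families $\{x \mapsto \lVert T(x)\rVert_K : K \subseteq \Omega \text{ compact}\}$ and $\{p_{N_\alpha}\}_{\alpha \in A}$ generate the same locally convex topology on $E$ — each seminorm of one family is dominated by a finite maximum of seminorms of the other — so $T$ is a homeomorphism of $E$ onto $T[E]$ with the subspace topology, as desired.

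I expect no real obstacle; the argument is a routine unwinding of the definition of $f$ together with the fact (from \autoref{lem:Ck-representation}) that compact subsets of $\Omega$ meet only finitely many of the $N_\alpha$. The one point that deserves a word of care is the interpretation of ``uniform convergence on the sets $\{N_\alpha\}$'': passing to finite unions of the $N_\alpha$ only replaces the seminorms $p_{N_\alpha}$ by their finite maxima and hence does not change the topology, which is exactly what makes the comparison with the $\lVert\,\cdot\,\rVert_K$ go through in both directions.
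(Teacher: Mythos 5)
Your proof is correct and complete: injectivity follows from the Hausdorffness of $E$ together with \myautoref{lem:representation}{itm:repr:injective}, and the two-sided seminorm comparison (each $p_{N_\alpha}$ equals $x \mapsto \lVert T(x)\rVert_{\Omega_\alpha}$ with $\Omega_\alpha$ compact, while every compact $K \subseteq \Omega$ meets only finitely many $\Omega_\alpha$ by \autoref{lem:Ck-representation}) shows the topology of $E$ is exactly the initial topology induced by $T$. Note that the paper deliberately omits its proof of \autoref{prop:top-emb} (``We will not use this''), so there is nothing to compare against; your argument is the routine one the statement invites and fills that gap correctly.
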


In other words, in this case $E$ can be thought of as a subspace of $\Ck(\Omega)$ equipped with the subspace topology. Various classical representation theorems start by choosing a collection $\{N_\alpha\}_{\alpha \in A}$ with this property (see e.g.~\cite[Theorem V.4.4]{Schaefer} and \cite[\S 20.10.(3)]{Kothe-I}).
We will not use this, so the proof of \autoref{prop:top-emb} is omitted.

Classical representations of normed spaces (including Kadison's representation and the Gelfand representation) take $\{N_\alpha\}_{\alpha \in A}$ to be a one-element collection $\{N\}$, where $N \subseteq \ball{E\topdual}$ is a weak\nobreakdash-$*$ closed subset of the closed unit ball $\ball{E\topdual} \subseteq E\topdual$. Exactly which subset $N \subseteq \ball{E\topdual}$ is chosen depends on the context.

\section{Semisimple cones and positive representations}
\label{sec:positive}
In this section, we prove the main theorem. For this we use the following proposition.

\begin{proposition}
	\label{prop:supporting-hyperplanes}
	In a preordered vector space $E$, the supporting hyperplanes of $E_+$ are precisely the kernels of the non-zero positive linear functionals $E \to \R$.
\end{proposition}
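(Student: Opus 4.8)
The plan is to unwind both relevant definitions and match them directly. Recall that a hyperplane in $E$ is a set $H = f^{-1}(c)$ with $f : E \to \R$ a non-zero linear functional and $c \in \R$, and that $H$ is a \emph{supporting hyperplane} of the convex set $E_+$ when $E_+$ is contained in one of the two closed half-spaces bounded by $H$ and $H \cap E_+ \neq \varnothing$. By replacing $(f,c)$ with $(-f,-c)$ if necessary, I may assume throughout that the half-space in question is $\{x \in E : f(x) \geq c\}$.

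For the inclusion "$\Longrightarrow$" I would start from a supporting hyperplane $H = f^{-1}(c)$ with $f(x) \geq c$ for all $x \in E_+$. The key point — and the only place where the cone structure of $E_+$ enters — is that such a hyperplane must pass through the origin. Indeed, $0 \in E_+$ gives $c \leq f(0) = 0$; and choosing any $x_0 \in H \cap E_+$ (which exists by definition of supporting hyperplane), the vector $2x_0$ again lies in $E_+$, so $c \leq f(2x_0) = 2f(x_0) = 2c$, whence $c \geq 0$. Therefore $c = 0$, so $H = \ker f$, while the half-space inequality $f(x) \geq 0$ for all $x \in E_+$ says exactly that $f$ is positive; and $f$ is non-zero because $H$ is a hyperplane.

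For the converse inclusion "$\Longleftarrow$" I would take a non-zero positive linear functional $f : E \to \R$. Since $f \neq 0$, the set $\ker f = f^{-1}(0)$ is a hyperplane. Positivity of $f$ means $E_+ \subseteq \{x \in E : f(x) \geq 0\}$, i.e.\ $E_+$ lies in one of the closed half-spaces bounded by $\ker f$, and $0 \in E_+ \cap \ker f$ shows that this intersection is non-empty. Hence $\ker f$ is a supporting hyperplane of $E_+$, and the two inclusions together give the claim.

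Both directions are essentially immediate once the definitions are spelled out, so I do not anticipate a genuine obstacle; the one step deserving a moment's care is the deduction that $c = 0$, that is, the observation that a supporting hyperplane of a cone necessarily passes through the origin. This is precisely what upgrades "level set of a functional" to "kernel of a functional" and thereby forges the link with positive linear functionals.
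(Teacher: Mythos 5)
Your proof is correct and follows essentially the same route as the paper: both directions unwind the definitions, with the key step in the forward direction being that a supporting hyperplane of a cone must pass through the origin, shown by evaluating at $0 \in E_+$ and at a multiple (you use $2x_0$, the paper uses $nx_0$ with $n=0,2$) of a point $x_0 \in H \cap E_+$. No gaps; nothing further needed.
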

\begin{proof}
	If $\varphi$ is a non-zero positive linear functional, then $E_+$ is contained in $\{x \in E \, : \, \varphi(x) \geq 0\}$, and one has $0 \in \ker(\varphi) \cap E_+$, so $\ker(\varphi)$ is a supporting hyperplane of $E_+$.
	
	Conversely, let $H \subseteq E$ be an (affine) supporting hyperplane of $E_+$. Choose $\varphi \in E\algdual \setminus \{0\}$ and $\alpha \in \R$ such that $H = \{x \in E \, : \, \varphi(x) = \alpha\}$ and $E_+ \subseteq \{x \in E \, : \, \varphi(x) \geq \alpha\}$, and choose $x_0 \in E_+ \cap H$. For all $n \in \N_0$ we have $nx_0 \in E_+$, and therefore $n\alpha = n\varphi(x_0) = \varphi(nx_0) \geq \alpha$. Plugging in $n = 0$ yields $\alpha \leq 0$, while plugging in $n = 2$ yields $\alpha \geq 0$, so we must have $\alpha = 0$. It follows that $H = \ker(\varphi)$ and that $\varphi$ is a non-zero positive linear functional.
\end{proof}

\begin{corollary}
	\label{cor:closed-supporting-hyperplanes}
	In a preordered topological vector space $E$, the closed supporting hyperplanes of $E_+$ are precisely the kernels of the non-zero, positive, continuous linear functionals $E \to \R$.
\end{corollary}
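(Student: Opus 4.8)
The plan is to obtain this statement immediately from \autoref{prop:supporting-hyperplanes} by layering on the standard characterisation of continuous linear functionals: a non-zero linear functional on a topological vector space is continuous if and only if its kernel is closed (equivalently, its kernel is not dense); see e.g.~\cite[Theorem 1.18]{Rudin}.

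For the ``if'' direction, I would start from a non-zero positive continuous linear functional $\varphi : E \to \R$. \autoref{prop:supporting-hyperplanes} already identifies $\ker(\varphi)$ as a supporting hyperplane of $E_+$, and continuity of $\varphi$ makes $\ker(\varphi) = \varphi^{-1}[\{0\}]$ closed; hence it is a closed supporting hyperplane.

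For the ``only if'' direction, I would take a closed supporting hyperplane $H \subseteq E$. \autoref{prop:supporting-hyperplanes} yields a non-zero positive linear functional $\varphi \in E\algdual$ with $H = \ker(\varphi)$; moreover such a $\varphi$ is unique up to a non-zero scalar, so the argument does not depend on the choice. Since $H$ is a closed hyperplane, it is in particular a closed, non-dense subspace, so the cited equivalence forces $\varphi$ to be continuous. Thus $H$ is the kernel of a non-zero positive continuous linear functional, which is exactly what is claimed.

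There is essentially no obstacle here: all the real content is already contained in \autoref{prop:supporting-hyperplanes}, and the only external ingredient is the elementary fact that a hyperplane is closed precisely when a (hence any) linear functional defining it is continuous. If one prefers to avoid citing \cite{Rudin}, this fact can also be argued directly: if $\ker(\varphi)$ is closed, pick $x_0 \notin \ker(\varphi)$; then $E \setminus \ker(\varphi)$ is an open neighbourhood of $x_0$, and one extracts from it a balanced $0$-neighbourhood on which $\varphi$ is bounded, giving continuity.
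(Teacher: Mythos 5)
Your proposal is correct and follows essentially the same route as the paper: the paper's proof is exactly the combination of \autoref{prop:supporting-hyperplanes} with the fact that a non-zero linear functional is continuous if and only if its kernel is closed (\cite[Theorem 1.18]{Rudin}), which you merely spell out in both directions.
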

\begin{proof}
	This follows from \autoref{prop:supporting-hyperplanes} and the fact that a non-zero linear functional $E \to \R$ is continuous if and only if $\ker(E)$ is closed (see \cite[Theorem 1.18]{Rudin}).
\end{proof}

We now have all the ingredients for the classification of semisimple ordered topological vector spaces.

\begin{theorem}[Criteria for semisimplicity]
	\label{thm:pos:semisimple}
	For a convex cone $E_+ $ in a real topological vector space $E$, the following are equivalent:
	\begin{enumerate}[label=(\roman*)]
		\item\label{itm:pos:dual-cone-separates-points} The topological dual cone $E_+\topdual$ separates points on $E$;
		\item\label{itm:pos:weak-closure-is-proper} The topological dual space $E\topdual$ separates points and the weak closure $\overline{E_+}^{\,\weak}$ is a proper cone;
		\item\label{itm:pos:intersection-closed-hyperplanes} The intersection of all closed supporting hyperplanes of $E_+$ is $\{0\}$;
		\item\label{itm:pos:coarser-normality} There is a weaker \textup(i.e.~coarser\textup) locally convex topology on $E$ for which $E_+$ is normal;
		\item\label{itm:pos:Cp-representation} There is a topological space $\Omega$ and an injective positive representation $E \to \Cp(\Omega)$;
		\item\label{itm:pos:Ck-representation} There is a locally compact Hausdorff space $\Omega$ and an injective positive representation $E \to \Ck(\Omega)$.
	\end{enumerate}
	Furthermore, if $E$ is a normed space, then the topology in \myref{itm:pos:coarser-normality} can be taken normable, and the space $\Omega$ in \myref{itm:pos:Ck-representation} can be taken compact.
\end{theorem}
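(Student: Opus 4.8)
The plan is to prove the six conditions equivalent by showing that each of \myref{itm:pos:weak-closure-is-proper}--\myref{itm:pos:Ck-representation} is equivalent to the ``hub'' condition \myref{itm:pos:dual-cone-separates-points}, that $E_+\topdual$ separates points on $E$. Four of these equivalences will be formal consequences of the results in \mysecref{sec:representation-lemma} and of \autoref{cor:closed-supporting-hyperplanes}; the one that needs genuine input is \myref{itm:pos:coarser-normality}.

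\textbf{The formal equivalences.} For \myref{itm:pos:dual-cone-separates-points} $\Leftrightarrow$ \myref{itm:pos:weak-closure-is-proper} I would use the identity $E_+\topdualdual = \overline{E_+}^{\,\weak}$ from \mysecref{sec:notation}: from it one computes that $\lineal\big(\overline{E_+}^{\,\weak}\big)$ is the annihilator $\{x \in E : \varphi(x) = 0 \text{ for all } \varphi \in E_+\topdual\}$, so $\overline{E_+}^{\,\weak}$ is proper exactly when $E_+\topdual$ separates points, and in that case the larger set $E\topdual$ separates points \emph{a fortiori}. For \myref{itm:pos:dual-cone-separates-points} $\Leftrightarrow$ \myref{itm:pos:intersection-closed-hyperplanes}, \autoref{cor:closed-supporting-hyperplanes} identifies the closed supporting hyperplanes of $E_+$ with the kernels $\ker\varphi$, $\varphi \in E_+\topdual \setminus \{0\}$, so their intersection is again that annihilator (an empty intersection read as $E$ when $E_+\topdual = \{0\}$). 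For \myref{itm:pos:dual-cone-separates-points} $\Leftrightarrow$ \myref{itm:pos:Cp-representation} I would invoke \myautoref{lem:representation}{itm:repr:positive} and \myautoref{lem:representation}{itm:repr:injective}: an injective positive representation $E \to \Cp(\Omega)$ amounts to a continuous map $\Omega \to E_\weakstar\topdual$ with image inside $E_+\topdual$ that separates points, and such a map exists precisely when $E_+\topdual$ separates points (take $\Omega := E_+\topdual$ with the relative weak-$*$ topology and the inclusion). Lastly \myref{itm:pos:Ck-representation} $\Rightarrow$ \myref{itm:pos:Cp-representation} is immediate since compact convergence is finer than pointwise convergence, whereas \myref{itm:pos:dual-cone-separates-points} $\Rightarrow$ \myref{itm:pos:Ck-representation} follows by giving $E_+\topdual$ the \emph{discrete} topology instead: that space is locally compact Hausdorff, its compact subsets are finite and hence equicontinuous, so \myautoref{lem:representation}{itm:repr:Ck-continuous} makes the inclusion-induced representation continuous into $\Ck(E_+\topdual)$.

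\textbf{Coarser normality.} This is the crux, and here I would lean on the classical duality theory underlying \autoref{thm:Schaefer-regular}: in a (Hausdorff) locally convex space, $E_+$ is normal iff its topology is generated by $E_+$-monotone seminorms, iff the dual $E\topdual$ is spanned by the dual cone $E_+\topdual$. For \myref{itm:pos:dual-cone-separates-points} $\Rightarrow$ \myref{itm:pos:coarser-normality} I would put on $E$ the topology $\tau := \sigma\big(E, \spn E_+\topdual\big)$: it is coarser than the original topology, it is Hausdorff since $\spn E_+\topdual$ separates points, and it is generated by the seminorms $x \mapsto \max_{\varphi \in F}|\varphi(x)|$ over finite $F \subseteq E_+\topdual$, each of which is monotone on $E_+$, so $E_+$ is $\tau$-normal. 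Conversely, given a coarser locally convex topology $\tau$ (Hausdorff, by the standing convention of the paper) for which $E_+$ is normal, the spanning property gives $(E,\tau)\topdual = E_+^{\tau\topdual} - E_+^{\tau\topdual}$; since $(E,\tau)\topdual$ separates points and is spanned by $E_+^{\tau\topdual}$, the cone $E_+^{\tau\topdual}$ separates points, and $E_+^{\tau\topdual} \subseteq E_+\topdual$ as $\tau$ is coarser, so $E_+\topdual$ separates points. I expect the main obstacle to be nothing more than isolating and citing the appropriate normal-cone duality statement cleanly; everything around it is formal.

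\textbf{The normed refinement.} Finally, suppose $E$ is normed and \myref{itm:pos:dual-cone-separates-points} holds. I would take $N := \ball{E\topdual} \cap E_+\topdual$, which is weak-$*$ compact (by the Alaoglu--Bourbaki theorem together with the weak-$*$ closedness of a dual cone) and still separates points, since any $\varphi \in E_+\topdual$ with $\varphi(x) \neq 0$ rescales into $N$. With $\Omega := N$ given its weak-$*$ topology --- a \emph{compact} Hausdorff space --- the one-set case of \autoref{lem:Ck-representation} produces a continuous representation $E \to \Ck(N) = C(N)$ which, by \autoref{lem:representation}, is injective and positive; this is the compact version of \myref{itm:pos:Ck-representation}. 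The seminorm $x \mapsto \sup_{\varphi \in N}|\varphi(x)|$ it induces on $E$ is dominated by $\lVert \:\cdot\: \rVert$, is a norm because $N$ separates points, and is monotone on $E_+$ (for $0 \le x \le y$ one has $0 \le \varphi(x) \le \varphi(y)$ for every $\varphi \in N$); so $E_+$ is normal for this weaker normable topology, the normable version of \myref{itm:pos:coarser-normality}.
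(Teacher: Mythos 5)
Your argument is correct, and the bulk of it is the paper's own proof: the same transpose machinery (\autoref{lem:representation}), the discrete topology on $E_+\topdual$ for \myref{itm:pos:Ck-representation}, the bipolar identity $E_+\topdualdual=\overline{E_+}^{\,\weak}$ for \myref{itm:pos:weak-closure-is-proper}, \autoref{cor:closed-supporting-hyperplanes} for \myref{itm:pos:intersection-closed-hyperplanes}, and $\Omega=E_+\topdual\cap\ball{E\topdual}$ with the induced supremum norm in the normed case. The one genuinely different piece is how \myref{itm:pos:coarser-normality} is tied in. The paper runs a cycle and proves \myref{itm:pos:Cp-representation}$\Rightarrow$\myref{itm:pos:coarser-normality} by pulling back the $\Cp(\Omega)$-topology (monotone seminorms $x\mapsto|T(x)(\omega)|$), and then \myref{itm:pos:coarser-normality}$\Rightarrow$\myref{itm:pos:weak-closure-is-proper} using only that a normal cone has proper closure together with a comparison of the $\lintop$-closure and the weak closure; you instead prove \myref{itm:pos:dual-cone-separates-points}$\Rightarrow$\myref{itm:pos:coarser-normality} directly with $\sigma(E,\spn E_+\topdual)$ (which, for $\Omega=E_+\topdual$, is the same topology the paper obtains) and close the loop via the classical decomposition $(E_\lintop)\topdual=E_+^{\lintop\hair\topdual}-E_+^{\lintop\hair\topdual}$ for normal cones. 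Both routes lean on standard normal-cone theory; the paper's choice avoids invoking the dual decomposition theorem, while yours avoids the closure-comparison argument. One caution: your stated equivalence ``$E_+$ normal iff $E\topdual$ is spanned by $E_+\topdual$'' is only an implication in a general locally convex space (the converse holds for the weak topology, and for Banach spaces with closed cones), but since you use only the valid direction and construct the coarser topology independently, nothing in your proof breaks.
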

\begin{proof}
	$\myref{itm:pos:dual-cone-separates-points} \Longleftrightarrow \myref{itm:pos:intersection-closed-hyperplanes}$. By \autoref{cor:closed-supporting-hyperplanes}, we have
	\[ \bigcap_{\substack{H\subseteq E\ \text{closed}\\\text{supporting hyplerplane}}} \hspace*{-7mm} H \hspace{7mm} = \ \bigcap_{\varphi \in E_+\topdual \setminus \{0\}} \ker(\varphi) \ =\  \bigcap_{\varphi \in E_+\topdual} \ker(\varphi), \]
	so the intersection of all closed supporting hyperplanes of $E_+$ is equal to $\{0\}$ if and only if $E_+\topdual$ separates points on $E$.
	
	$\myref{itm:pos:dual-cone-separates-points} \Longrightarrow \myref{itm:pos:Ck-representation}$. Let $\Omega := E_+\topdual$ and let $f : \Omega \to E\topdual$ denote the inclusion. If $\Omega_d$ denotes $\Omega$ with the discrete topology, then $f$ is continuous as a map $\Omega_d \to E_\weakstar\topdual$, so we obtain a representation $f^\OEt : E \to \Cp(\Omega_d)$ via the methods of \mysecref{subsec:transpose}. The compact subsets of $\Omega_d$ are precisely the finite sets, so it follows from \myautoref{lem:representation}{itm:repr:Ck-continuous} that $f^\OEt$ is continuous as a map $E \to \Ck(\Omega_d)$. Likewise, it follows from \autoref{lem:representation} that $f^\OEt$ is positive (since $\Omega_d \subseteq E_+\topdual$) and injective (since $f[\Omega_d]$ separates points on $E$).
	
	$\myref{itm:pos:Ck-representation} \Longrightarrow \myref{itm:pos:Cp-representation}$. Choose a locally compact Hausdorff space $\Omega$ and an injective positive representation $T : E \to \Ck(\Omega)$. The topology of pointwise convergence is weaker (i.e.~coarser) than the topology of compact convergence, so $T$ is also continuous as a map $E \to \Cp(\Omega)$.
	
	$\myref{itm:pos:Cp-representation} \Longrightarrow \myref{itm:pos:coarser-normality}$. Choose a topological space $\Omega$ and an injective positive representation $T : E \to \Cp(\Omega)$. Since $T$ is injective, $E$ is linearly isomorphic to a subspace of $\Cp(\Omega)$. Let $\lintop$ denote the subspace topology that $E$ inherits this way. Since $T : E \to \Cp(\Omega)$ is continuous, $\lintop$ is weaker than the original topology of $E$. Note that $\lintop$ is the locally convex topology given by the family of seminorms $\{\seminorm_\omega\}_{\omega \in \Omega}$, where $\seminorm_\omega(x) := |T(x)(\omega)|$. Since $T$ is positive, each of these seminorms is monotone, so it follows that $E_+$ is $\lintop$-normal.
	
	$\myref{itm:pos:coarser-normality} \Longrightarrow \myref{itm:pos:weak-closure-is-proper}$. Let $\lintop$ be a weaker locally convex topology for which $E_+$ is normal. Then $(E_\lintop)\topdual \subseteq E\topdual$, since every continuous functional $E_\lintop \to \R$ is certainly continuous $E \to \R$. But $(E_\lintop)\topdual$ separates points on $E$ (since $\lintop$ is locally convex), so it follows that $E\topdual$ separates points on $E$ as well.
	
	Since $E_+$ is $\lintop$-normal, the $\lintop$-closure $\overline{E_+}^{\,\lintop}$ is a proper cone. Seeing as $E_+$ is convex and $\lintop$ is locally convex, the $\lintop$-closure of $E_+$ coincides with the respective weak closure; that is:
	\[ \overline{E_+}^{\,\lintop} \, = \, \overline{E_+}^{\,\sigma(E,(E_\lintop){}\topdual)}. \]
	Since $(E_\lintop)\topdual \subseteq E\topdual$, the weak topology $\sigma(E,E\topdual)$ is finer than the weak topology $\sigma(E,(E_\lintop)\topdual)$. It follows that
	\[ \overline{E_+}^{\,\weak} \, := \, \overline{E_+}^{\,\sigma(E,E\topdual)} \, \subseteq \, \overline{E_+}^{\,\sigma(E,(E_\lintop){}\topdual)} \, = \, \overline{E_+}^{\,\lintop}, \]
	and we conclude that $\overline{E_+}^{\,\weak}$ is a proper cone.
	
	$\myref{itm:pos:weak-closure-is-proper} \Longrightarrow \myref{itm:pos:dual-cone-separates-points}$. Let $E_+\topdualdual \subseteq E$ denote the dual cone of $E_+\topdual \subseteq E\topdual$ under the dual pair $\langle E\topdual , E\rangle$. Then $(E_+\topdual)^\perp = E_+\topdualdual \cap -E_+\topdualdual$, and by the bipolar theorem one has $E_+\topdualdual = \overline{E_+}^{\,\weak}$. Therefore $E_+\topdual$ separates points on $E$ (that is, $(E_+\topdual)^\perp = \{0\}$) if and only if $\overline{E_+}^{\,\weak}$ is a proper cone.
	
	\emph{Normed case, $\myref{itm:pos:dual-cone-separates-points} \Longrightarrow \myref{itm:pos:Ck-representation}$.} Let $\Omega := E_+\topdual \cap B_{E\topdual}$ with the relative weak\nobreakdash-$*$ topology, and let $f : \Omega \to E\topdual$ be the inclusion. Then $\Omega$ is a compact Hausdorff space and $f[\Omega]$ is equicontinuous (since $f[\Omega] \subseteq B_{E\topdual}$), so the corresponding representation $f^\OEt : E \to \Cp(\Omega)$ is continuous as a map $E \to \Ck(\Omega)$, by \myautoref{lem:representation}{itm:repr:Ck-continuous}. Likewise, it follows from \autoref{lem:representation} that $f^\OEt$ is positive (because $f[\Omega] \subseteq E_+\topdual$) and injective (since $E_+\topdual \cap B_{E\topdual}$ separates points on $E$).
	
	\emph{Normed case, $\myref{itm:pos:Ck-representation} \Longrightarrow \myref{itm:pos:coarser-normality}$.} Let $\Omega$ be a compact Hausdorff space and let $T : E \to \Ck(\Omega)$ be an injective positive representation. Note that $\Ck(\Omega)$ is simply the standard normed space $C(\Omega)$ (with the supremum norm). Since $T$ is injective and positive, the function $\lVert x \rVert_\Omega := \lVert T(x) \rVert_\infty$ defines a monotone norm on $E$. Then $E_+$ is normal with respect to $\lVert \,\cdot\, \rVert_\Omega$ (because $\lVert \,\cdot\, \rVert_\Omega$ is monotone), and the topology induced by $\lVert \,\cdot\, \rVert_\Omega$ is weaker than the original topology (because $E \to \Ck(\Omega)$ is continuous).
\end{proof}

\begin{corollary}
	\label{cor:pos:regular}
	For a convex cone $E_+$ in a real vector space $E$, the following are equivalent:
	\begin{enumerate}[label=(\roman*)]
		\item $E_+$ is regular \textup(in the sense of \autoref{thm:Schaefer-regular}\textup);
		\item\label{itm:pos:intersection-hyperplanes} The intersection of all supporting hyperplanes of $E_+$ is $\{0\}$;
		\item There is a topological space $\Omega$ and an injective positive linear map $E \to C(\Omega)$;
		\item There is a locally compact Hausdorff space $\Omega$ and an injective positive linear map $E \to C(\Omega)$.
	\end{enumerate}
\end{corollary}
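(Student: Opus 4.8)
The plan is to deduce \autoref{cor:pos:regular} from \autoref{thm:pos:semisimple} by equipping the bare vector space $E$ with the finest locally convex topology $\mathfrak{t}$. This topology has two features that drive the whole argument: its topological dual is the full algebraic dual $E\algdual$ (so its topological dual cone is $E_+\algdual$), and every linear map from $(E,\mathfrak{t})$ into a locally convex space is automatically continuous --- in particular, for any topological space $\Omega$, the representations $(E,\mathfrak{t}) \to \Cp(\Omega)$ are precisely the linear maps $E \to \Cp(\Omega)$, and likewise for $\Ck(\Omega)$. (One could equally well run the argument with $\sigma(E,E\algdual)$ in place of $\mathfrak{t}$; this is essentially the remark, made after \autoref{thm:pos:semisimple}, that $E_+$ is regular iff it is semisimple for any topology compatible with $\langle E,E\algdual\rangle$.)

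With this setup, \autoref{cor:pos:regular} is just \autoref{thm:pos:semisimple} applied to $(E,\mathfrak{t})$, once the conditions are matched up. By \autoref{thm:Schaefer-regular}, regularity of $E_+$ is exactly the statement that $E_+\algdual$ separates points on $E$, which, since $E_+\algdual$ is the topological dual cone of $(E,\mathfrak{t})$, is condition~\myref{itm:pos:dual-cone-separates-points} of \autoref{thm:pos:semisimple} for $(E,\mathfrak{t})$; this accounts for item~(i). Since every linear functional on $E$ is $\mathfrak{t}$-continuous, every hyperplane of $E$ is $\mathfrak{t}$-closed, so by \autoref{prop:supporting-hyperplanes} together with \autoref{cor:closed-supporting-hyperplanes} the closed supporting hyperplanes of $E_+$ coincide with its supporting hyperplanes; hence condition~\myref{itm:pos:intersection-closed-hyperplanes} of the theorem becomes item~\myref{itm:pos:intersection-hyperplanes} of the corollary. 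Finally, since an injective positive representation $(E,\mathfrak{t}) \to \Cp(\Omega)$ is nothing but an injective positive linear map $E \to \Cp(\Omega)$, and since injectivity and positivity depend only on the linear and order structure of the codomain --- so that $\Cp(\Omega)$ may freely be replaced by $C(\Omega)$ --- condition~\myref{itm:pos:Cp-representation} becomes item~(iii), and, by the identical reasoning applied to $\Ck(\Omega)$, condition~\myref{itm:pos:Ck-representation} becomes item~(iv).

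Invoking \autoref{thm:pos:semisimple}, conditions \myref{itm:pos:dual-cone-separates-points}, \myref{itm:pos:intersection-closed-hyperplanes}, \myref{itm:pos:Cp-representation}, \myref{itm:pos:Ck-representation} are equivalent, and therefore so are items (i)--(iv) of \autoref{cor:pos:regular}. I expect no genuine difficulty here: the proof is essentially bookkeeping, and the only thing requiring care is the pair of observations that, in the finest locally convex topology, a ``closed supporting hyperplane'' is merely a supporting hyperplane and a ``representation into $\Cp(\Omega)$ or $\Ck(\Omega)$'' is merely a linear map --- which is precisely what lets the topological conditions \myref{itm:pos:intersection-closed-hyperplanes}, \myref{itm:pos:Cp-representation}, \myref{itm:pos:Ck-representation} collapse onto the purely algebraic conditions~(ii), (iii), (iv).
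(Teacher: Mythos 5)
Your proof is correct and follows exactly the paper's own route: the paper also equips $E$ with the finest locally convex topology (so that $E\topdual = E\algdual$, all hyperplanes are closed, and every linear map into a locally convex space is continuous) and then reads off the corollary from \autoref{thm:pos:semisimple}. Your version merely spells out the bookkeeping that the paper leaves implicit.
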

\begin{proof}
	Equip $E$ with the finest locally convex topology, so that $E\topdual = E\algdual$, all subspaces are closed, and all linear maps $E \to F$ (with $F$ locally convex) are continuous. Then the result follows immediately from \autoref{thm:pos:semisimple}.%
	\hair\footnote{Recall that $C(\Omega) = \Cp(\Omega) = \Ck(\Omega)$ as vector spaces; the subscript merely indicates the choice of topology.}
\end{proof}

\begin{remark}
	\label{rmk:normed-direct-proof}
	In the proof of \autoref{thm:pos:semisimple}, the normed case was handled via the implications $\myref{itm:pos:dual-cone-separates-points} \Longrightarrow \myref{itm:pos:Ck-representation} \Longrightarrow \myref{itm:pos:coarser-normality}$. A direct proof of the implication $\myref{itm:pos:weak-closure-is-proper} \Longrightarrow \myref{itm:pos:coarser-normality}$ in the normed case will be given in \cite{order-unitizations}. There it will be shown that if $\lVert \,\cdot\, \rVert_1$ is a norm on $E$ for which $\overline{E_+}$ is a proper cone, then
	\[ \lVert x \rVert_2 := \max\!\big(d_1(x,E_+) , d_1(-x,E_+)\big) \]
	defines a smaller norm on $E$ for which $E_+$ is normal. (Here $d_1(x,E_+) = \inf_{y\in E_+} \lVert x - y \rVert_1$ denotes the $\lVert \,\cdot\, \rVert_1$-distance between $x$ and $E_+$.) Furthermore, this procedure corresponds with taking the full hull of the open unit ball; see \cite{order-unitizations}.
\end{remark}

\begin{remark}
	\label{rmk:radicals}
	We give another algebraic interpretation of semisimplicity and regularity, in terms of \emph{order ideals}, as defined by Kadison in the 1950s (\cite{Kadison-representation}; see also \cite{Bonsall}).
	
	If $E$ is an ordered vector space, then a subspace $I \subseteq E$ is an \emph{order ideal} if $I \cap E_+$ is a face of $E_+$, or equivalently, if the pushforward of $E_+$ along the quotient $E \to E/I$ is a proper cone. (For a proof of equivalence, and for additional equivalent definitions, see \cite[Proposition A.2]{ordered-tensor-products-i}.)
	
	An order ideal $I \subseteq E$ is \emph{proper} if $I \neq E$, and \emph{maximal} if it is proper and not contained in another proper order ideal.
	It follows from \cite[Theorem 2]{Bonsall} that an order ideal $I \subseteq E$ is maximal if and only if $E/I$ is one-dimensional, so the maximal order ideals are precisely the supporting hyperplanes of $E_+$ (cf.~\autoref{prop:supporting-hyperplanes}).
	
	In analogy with the Jacobson radical from abstract algebra, we define the \emph{order radical} $\orad(E_+)$ of $E_+$ as the intersection of all maximal order ideals, and the \emph{topological order radical} $\torad(E_+)$ of $E_+$ as the intersection of all \emph{closed} maximal order ideals.
	By \myautoref{cor:pos:regular}{itm:pos:intersection-hyperplanes}, $E_+$ is regular if and only if $\orad(E_+) = \{0\}$, and by \myautoref{thm:pos:semisimple}{itm:pos:intersection-closed-hyperplanes}, $E_+$ is semisimple if and only if $\torad(E_+) = \{0\}$.
	This shows that regularity and semisimplicity are closely related to \emph{Jacobson semisimplicity} in abstract algebra.
	
	We note that these order radicals have a more natural geometric definition.
	It follows from the proof of \autoref{thm:pos:semisimple}, $\myref{itm:pos:dual-cone-separates-points} \Longleftrightarrow \myref{itm:pos:intersection-closed-hyperplanes}$ and $\myref{itm:pos:weak-closure-is-proper} \Longrightarrow \myref{itm:pos:dual-cone-separates-points}$ that
	\[ \torad(E_+) \, = \, \bigcap_{\varphi\in E_+\topdual} \ker(\varphi) \, = \, (E_+\topdual)^\perp \, = \, \lineal(\overline{E_+}^{\,\weak}). \]
	Analogously, the order radical $\orad(E_+)$ is equal to the lineality space of the closure of $E_+$ in the finest locally convex topology.
	We believe that this geometric interpretation is more natural, and we have not found any real benefit from the algebraic interpretation presented here.
\end{remark}

\section{Weakly closed cones and bipositive representations}
\label{sec:bipositive}
The representations $E \to \Cp(\Omega)$ and $E \to \Ck(\Omega)$ obtained in \autoref{thm:pos:semisimple} are injective and positive, but not necessarily bipositive. We proceed to state similar criteria for the existence of bipositive injective representations $E \to \Cp(\Omega)$ and $E \to \Ck(\Omega)$, so that the positive cone $E_+$ can be written as the pullback of a cone of nonnegative continuous functions.

Schaefer \cite[Theorem 5.1]{Schaefer-I} proved that a preordered locally convex space $E$ is topologically order isomorphic with a subspace of some $\Ck(\Omega)$ ($\Omega$ locally compact and Hausdorff) if and only if $E_+$ is closed and normal (this is \autoref{thm:Schaefer-representation} in the introduction). If we remove normality, we obtain the following equivalent criteria for the existence of an injective bipositive representation $E \to \Cp(\Omega)$ or $E \to \Ck(\Omega)$.

\begin{theorem}
	\label{thm:bip:semisimple}
	For a convex cone $E_+ $ in a real topological vector space $E$, the following are equivalent:
	\begin{enumerate}[label=(\roman*)]
		\item $E_+$ is weakly closed and semisimple;
		\item\label{itm:bip:weakly-closed-proper} $E_+$ is a weakly closed proper cone;
		\item\label{itm:bip:coarser-normality} There is a weaker locally convex topology on $E$ for which $E_+$ is closed and normal;
		\item\label{itm:bip:Cp-representation} There is a topological space $\Omega$ and an injective bipositive representation $E \to \Cp(\Omega)$;
		\item\label{itm:bip:Ck-representation} There is a locally compact Hausdorff space $\Omega$ and an injective bipositive representation $E \to \Ck(\Omega)$.
	\end{enumerate}
	Furthermore, if $E$ is a normed space, then the topology in \myref{itm:bip:coarser-normality} can be taken normable, and the space $\Omega$ in \myref{itm:bip:Ck-representation} can be taken compact.
\end{theorem}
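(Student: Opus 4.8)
The plan is to run essentially the same argument as for \autoref{thm:pos:semisimple}, upgrading \emph{positive} to \emph{bipositive} and \emph{normal} to \emph{closed and normal}, with the weak-closedness of $E_+$ supplying the one extra ingredient demanded by \myautoref{lem:representation}{itm:repr:bipositive}. First I would dispatch the equivalence of (i) and \myref{itm:bip:weakly-closed-proper}, then prove the cycle $\myref{itm:bip:weakly-closed-proper} \Longrightarrow \myref{itm:bip:Ck-representation} \Longrightarrow \myref{itm:bip:Cp-representation} \Longrightarrow \myref{itm:bip:coarser-normality} \Longrightarrow \myref{itm:bip:weakly-closed-proper}$. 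For the first point, recall from \autoref{thm:pos:semisimple} that semisimplicity of $E_+$ is equivalent to $E\topdual$ separating points together with $\overline{E_+}^{\,\weak}$ being a proper cone; if $E_+$ is weakly closed, this is just the assertion that $E_+$ is proper and $E\topdual$ separates points, so all that remains is to observe that weakly closed and proper \emph{forces} $E\topdual$ to separate points. Indeed, the subspace $N := (E\topdual)^\perp = \bigcap_{\varphi \in E\topdual} \ker \varphi$ satisfies $N \subseteq E_+\topdualdual = \overline{E_+}^{\,\weak} = E_+$ (each $\varphi \in E_+\topdual$ vanishes on $N$) and $N = -N \subseteq E_+$, whence $N \subseteq \lineal(E_+) = \{0\}$.

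For $\myref{itm:bip:weakly-closed-proper} \Longrightarrow \myref{itm:bip:Ck-representation}$ I take $\Omega := E_+\topdual$ with the \emph{discrete} topology and let $f : \Omega \to E_\weakstar\topdual$ be the inclusion, producing the representation $f^\OEt : E \to \Cp(\Omega)$ of \mysecref{subsec:transpose}. Here $\Omega$ is locally compact Hausdorff with only finite compact subsets, so $f[K]$ is equicontinuous for every compact $K \subseteq \Omega$ and hence $f^\OEt$ is continuous as a map $E \to \Ck(\Omega)$ by \myautoref{lem:representation}{itm:repr:Ck-continuous}. For bipositivity, \myautoref{lem:representation}{itm:repr:bipositive} asks for $E_+$ to be weakly closed (the hypothesis) and for $E_+\topdual = \overline{\cone(f^\ECt[\Omega])}^{\,\weakstar}$; but $\cone(f^\ECt[\Omega]) = \cone(E_+\topdual) = E_+\topdual$, which is already weak-$*$ closed, being a dual cone. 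Bipositivity together with properness of $E_+$ then gives injectivity. The step $\myref{itm:bip:Ck-representation} \Longrightarrow \myref{itm:bip:Cp-representation}$ is immediate since the pointwise topology is coarser than the compact-open one. For $\myref{itm:bip:Cp-representation} \Longrightarrow \myref{itm:bip:coarser-normality}$ I take an injective bipositive $T : E \to \Cp(\Omega)$ and let $\lintop$ be the subspace topology pulled back through $T$; it is locally convex, Hausdorff (because $T$ is injective), and coarser than the original topology (because $T$ is continuous); then $E_+ = T^{-1}[\Cp(\Omega)_+]$ is $\lintop$-closed since $\Cp(\Omega)_+$ is closed, and the generating seminorms $x \mapsto |T(x)(\omega)|$ are monotone by positivity of $T$, so $E_+$ is $\lintop$-normal. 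Finally, for $\myref{itm:bip:coarser-normality} \Longrightarrow \myref{itm:bip:weakly-closed-proper}$: a normal cone in a Hausdorff space is proper, so $E_+ = \overline{E_+}^{\,\lintop}$ is proper, and, being $\lintop$-closed and convex, it is $\sigma(E,(E_\lintop)\topdual)$-closed, hence closed in the finer topology $\sigma(E,E\topdual)$, i.e.\ weakly closed.

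For the normed addenda I repeat the construction of $\myref{itm:bip:weakly-closed-proper} \Longrightarrow \myref{itm:bip:Ck-representation}$ with $\Omega := E_+\topdual \cap \ball{E\topdual}$ in the relative weak-$*$ topology: this space is weak-$*$ compact by Alaoglu--Bourbaki, $f[\Omega]$ is equicontinuous since it lies in $\ball{E\topdual}$, and $\cone(f[\Omega]) = E_+\topdual$ because every element of $E_+\topdual$ is a nonnegative multiple of one of norm $\leq 1$; the same reasoning then gives an injective bipositive $T : E \to \Ck(\Omega) = C(\Omega)$ with $\Omega$ compact. Setting $\lVert x\rVert_\Omega := \lVert T(x)\rVert_\infty$, this is a norm (as $T$ is injective) whose topology is coarser than the original one (as $T$ is continuous), it is monotone (so $E_+$ is normal for it), and $E_+ = T^{-1}[C(\Omega)_+]$ is $\lVert \,\cdot\, \rVert_\Omega$-closed because $T$ is an isometry of $(E,\lVert \,\cdot\, \rVert_\Omega)$ onto a subspace of $C(\Omega)$ and $C(\Omega)_+$ is closed.

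The one genuinely new point — and hence the step to watch — is the bookkeeping in \myautoref{lem:representation}{itm:repr:bipositive}: one has to choose $\Omega$ so that $\cone(T^\ECt[\Omega])$ is \emph{already} a weak-$*$ closed convex cone equal to all of $E_+\topdual$, which is why the full dual cone (respectively its intersection with the unit ball, in the normed case) is the right choice and why nothing is lost when passing to the weak-$*$ closure; combined with the weak-closedness hypothesis, this is exactly what promotes the merely positive representations of \autoref{thm:pos:semisimple} to bipositive ones. A minor secondary subtlety, dealt with above, is that in the implication from \myref{itm:bip:weakly-closed-proper} to (i) the separation property of $E\topdual$ is not assumed but falls out of weak closedness and properness.
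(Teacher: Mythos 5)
Your proof is correct and follows essentially the same route as the paper, which simply says ``same as the positive case, with modifications'': you use the identical constructions ($\Omega := E_+\topdual$ discrete, resp.\ $E_+\topdual \cap \ball{E\topdual}$ in the normed case), invoke \myautoref{lem:representation}{itm:repr:bipositive} for bipositivity, and add the closedness/properness bookkeeping in the steps \myref{itm:bip:Cp-representation} $\Rightarrow$ \myref{itm:bip:coarser-normality} $\Rightarrow$ \myref{itm:bip:weakly-closed-proper}. Your write-up is merely more explicit than the paper's (e.g.\ checking that weak closedness plus properness forces $E\topdual$ to separate points, and getting injectivity from bipositivity and properness), but these are the same ideas spelled out.
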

\begin{proof}
	The proof is the same as for \autoref{thm:pos:semisimple}, with the following modifications:
	
	$\myref{itm:bip:weakly-closed-proper} \Longrightarrow \myref{itm:bip:Ck-representation}$. To show that the constructed representation $E \to \Ck(\Omega)$ is bipositive, use \myautoref{lem:representation}{itm:repr:bipositive}.
	
	$\myref{itm:bip:Cp-representation} \Longrightarrow \myref{itm:bip:coarser-normality}$. If $T : E \to \Cp(\Omega)$ is bipositive and continuous, then it is also weakly continuous, and $E_+ = T^{-1}[\Cp(\Omega)_+]$. Since $\Cp(\Omega)_+$ is weakly closed, it follows that $E_+$ is weakly closed.
\end{proof}

In the normed case, a direct proof of $\myref{itm:bip:weakly-closed-proper} \Longrightarrow \myref{itm:bip:coarser-normality}$ follows from \cite{order-unitizations}, using the formula from \autoref{rmk:normed-direct-proof}.

Analogously, we get the following bipositive/weakly closed version of \autoref{cor:pos:regular}.

\begin{corollary}
	\label{cor:bip:regular}
	For a convex cone $E_+$ in a real vector space $E$, the following are equivalent:
	\begin{enumerate}[label=(\roman*)]
		\item $E_+$ is a closed proper cone with respect to the finest locally convex topology on $E$;
		\item There is a locally convex topology on $E$ for which $E_+$ is closed and regular;
		\item There is a locally convex topology on $E$ for which $E_+$ is a closed proper cone;
		\item There is a locally convex topology on $E$ for which $E_+$ is closed and normal;
		\item There is a topological space $\Omega$ and an injective bipositive linear map $E \to C(\Omega)$;
		\item There is a locally compact Hausdorff space $\Omega$ and an injective bipositive linear map $E \to C(\Omega)$.
	\end{enumerate}
\end{corollary}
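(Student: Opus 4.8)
The plan is to run \autoref{thm:bip:semisimple} on $E$ equipped with its finest locally convex topology, exactly as the proof of \autoref{cor:pos:regular} runs \autoref{thm:pos:semisimple}, and then to slot the two extra conditions (ii) and (iii) into the resulting chain of equivalences. For the finest locally convex topology one has $E\topdual = E\algdual$; every linear map from $E$ into a locally convex space --- in particular into $C(\Omega)$, which equals $\Cp(\Omega)$ and $\Ck(\Omega)$ as a vector space --- is automatically continuous, hence is a representation; for convex subsets the weak closure and the closure coincide; this topology is finer than every other locally convex topology on $E$; and, the point that matters for (ii) and (iii), any cone of $E$ that is closed for \emph{some} locally convex topology is automatically closed for the finest one. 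Finally, since semisimplicity depends only on $E_+$ and on the dual pair --- here $\langle E,E\algdual\rangle$ --- semisimplicity for the finest locally convex topology coincides with regularity.

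Under this dictionary, \autoref{thm:bip:semisimple} applied to $E$ with its finest locally convex topology gives the equivalence of: (a) $E_+$ is closed (for the finest locally convex topology) and regular; (b) $E_+$ is a closed proper cone for that topology; (c) there is a locally convex topology on $E$ for which $E_+$ is closed and normal; (d) there is a topological space $\Omega$ and an injective bipositive linear map $E \to C(\Omega)$; and (e) there is a locally compact Hausdorff space $\Omega$ and an injective bipositive linear map $E \to C(\Omega)$. Here (b)--(e) are verbatim conditions (i), (iv), (v) and (vi) of the corollary, while (a) is equivalent to condition (ii), since by the last observation of the previous paragraph a cone is closed for the finest locally convex topology if and only if it is closed for some locally convex topology.

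It then remains to fold condition (iii) --- that $E_+$ is a closed proper cone for some locally convex topology --- into the equivalence. The implication (i)~$\Rightarrow$~(iii) is trivial, the finest locally convex topology being one such. For the converse, if $E_+$ is closed and proper for some locally convex topology $\lintop$, then $\overline{E_+}^{\,\lintop} = E_+$ is a proper cone, so $E_+$ is regular by condition (iii) of \autoref{thm:Schaefer-regular}; since $E_+$ is also closed for $\lintop$, this gives (iii)~$\Rightarrow$~(ii), which together with (i)~$\Rightarrow$~(iii) and the equivalence (i)~$\Leftrightarrow$~(ii) established above closes the loop. I anticipate no genuine difficulty: the argument is essentially a translation exercise on top of \autoref{thm:bip:semisimple}, and the only point requiring care is the direction of refinement --- the finest locally convex topology has the most closed sets and hence the smallest closures --- which is exactly what makes ``closed and regular'' strong enough to force ``closed and proper'' for that topology.
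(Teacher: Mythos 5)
Your proposal is correct and takes essentially the route the paper intends: the paper gives no explicit proof of this corollary, stating only that it follows ``analogously'' to \autoref{cor:pos:regular}, namely by equipping $E$ with its finest locally convex topology and specializing \autoref{thm:bip:semisimple}, which is exactly your argument. Your extra bookkeeping for conditions (ii) and (iii) --- closedness passes up to the finest locally convex topology, while properness and regularity are topology-independent, with \autoref{thm:Schaefer-regular} closing the loop --- correctly supplies the details the paper leaves implicit.
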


\section{Hereditary properties of semisimplicity}
\label{sec:stability}
We proceed to show that semisimplicity is preserved by subspaces, products, and direct sums, but not by (proper) quotients or completions.

In addition to the results from this section, we will prove in \cite{ordered-tensor-products-i} that semisimplicity is preserved by (algebraic) tensor products.
Since it is not preserved by completions, we do not know whether semisimplicity is also preserved by \emph{completed} locally convex tensor products; see \cite[Question 5.16]{ordered-tensor-products-i}.

\subsection{Subspaces}
It is easy to see that semisimplicity is preserved by passing to a subspace, a smaller cone, and a finer topology. This follows from the following general principle.

\begin{proposition}
	\label{prop:semisimple-subspace}
	Let $E$ and $F$ be preordered topological vector spaces and let $T : E \to F$ be continuous, positive, and injective. If $F$ is semisimple, then so is $E$.
\end{proposition}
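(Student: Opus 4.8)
The plan is to use the characterization of semisimplicity in terms of the topological dual cone separating points, i.e.\ condition \myref{itm:pos:dual-cone-separates-points} of \autoref{thm:pos:semisimple}. Thus it suffices to show that if $F_+\topdual$ separates points on $F$, then $E_+\topdual$ separates points on $E$. The mechanism is simply to pull back a separating positive functional along $T$, using that $T$ is continuous and positive, and using injectivity to guarantee that a nonzero vector of $E$ is sent to a nonzero vector of $F$.

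\textbf{Key steps.} First, fix $x \in E$ with $x \neq 0$. Since $T$ is injective, $T(x) \neq 0$ in $F$. Since $F$ is semisimple, \autoref{thm:pos:semisimple}\myref{itm:pos:dual-cone-separates-points} gives a functional $\psi \in F_+\topdual$ with $\psi(T(x)) \neq 0$. Next, set $\varphi := \psi \circ T : E \to \R$. This is linear, and it is continuous because $T$ is continuous and $\psi$ is continuous. It is positive: for any $u \in E_+$ we have $T(u) \in F_+$ because $T$ is positive, whence $\varphi(u) = \psi(T(u)) \geq 0$ because $\psi \in F_+\topdual$. Therefore $\varphi \in E_+\topdual$, and $\varphi(x) = \psi(T(x)) \neq 0$. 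Since $x \neq 0$ was arbitrary, $E_+\topdual$ separates points on $E$, so $E$ is semisimple by \autoref{thm:pos:semisimple} again.

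\textbf{Main obstacle.} There is essentially no obstacle: this is a routine dualization argument, and the only point requiring care is that injectivity of $T$ is exactly what is needed to pass from $x \neq 0$ to $T(x) \neq 0$ (without it the statement fails, since a zero map into a semisimple space is continuous and positive). If one prefers to avoid invoking \autoref{thm:pos:semisimple}, the same argument can be phrased directly in terms of closed supporting hyperplanes (condition \myref{itm:pos:intersection-closed-hyperplanes}): the preimage under $T$ of a closed supporting hyperplane of $F_+$ not containing $T(x)$ is, by \autoref{cor:closed-supporting-hyperplanes} applied to $\varphi = \psi \circ T$, a closed supporting hyperplane of $E_+$ not containing $x$; but the dual-cone formulation above is the cleanest.
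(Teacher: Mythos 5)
Your proof is correct and is essentially the paper's argument in dual form: the paper simply composes an injective positive representation $S : F \to \Cp(\Omega)$ of $F$ (criterion \myautoref{thm:pos:semisimple}{itm:pos:Cp-representation}) with $T$, while you precompose individual separating positive functionals (criterion \myautoref{thm:pos:semisimple}{itm:pos:dual-cone-separates-points}); in both cases the whole content is that continuity, positivity, and injectivity/point-separation survive precomposition with $T$. No gap.
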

\begin{proof}
	If $S : F \to \Cp(\Omega)$ is an injective positive representation of $F$, then the composition $S\circ T : E \to F \to \Cp(\Omega)$ is an injective positive representation of $E$.
\end{proof}

\subsection{Products and direct sums}
We prove that semisimplicity is preserved by topological products, linear topological direct sums, and locally convex direct sums.

Let $\{E_\alpha\}_{\alpha \in A}$ be a family of topological vector spaces. Following Jarchow \cite[\S 4.3]{Jarchow}, we define the \emph{\textup(linear\textup) topological direct sum} $\linsum_\alpha E_\alpha$ as the vector space $\bigoplus_\alpha E_\alpha$ equipped with the finest linear topology for which the canonical injections $\iota_\beta : E_\beta \to \bigoplus_\alpha E_\alpha$ are continuous.

If all of the $E_\alpha$ are locally convex, then the \emph{locally convex direct sum} $\lcsum_\alpha E_\alpha$ is the same vector space $\bigoplus_\alpha E_\alpha$, but this time equipped with the finest \emph{locally convex} topology for which the injections $\iota_\beta : E_\beta \to \bigoplus_\alpha E_\alpha$ are continuous.
If $A$ is countable, then these two topologies on $\bigoplus_\alpha E_\alpha$ coincide (cf.~\cite[Proposition 6.6.9]{Jarchow}), but in general they can be different (cf.~\cite[Example 6.10.L]{Jarchow}).

The topological product $\prod_\alpha E_\alpha$ is simply the direct product equipped with the product topology, which is once again a topological vector space.
The relative product topology on $\bigoplus_\alpha E_\alpha$ is coarser than the (topological or locally convex) direct sum topology, so the natural map $\bigoplus_\alpha E_\alpha \to \prod_\alpha E_\alpha$ is continuous for either topology on $\bigoplus_\alpha E_\alpha$.

If each $E_\alpha$ is preordered by a convex cone $E_\alpha^+$, then we understand $\prod_\alpha E_\alpha$ (resp.~$\bigoplus_\alpha E_\alpha$) to be preordered by the pointwise cone $\prod_\alpha E_\alpha^+$ (resp.~$\bigoplus_\alpha E_\alpha^+$).

\begin{proposition}
	Let $\{E_\alpha\}_{\alpha \in A}$ be a family of preordered topological vector spaces. Then the following are equivalent:
	\begin{enumerate}[label=(\roman*),series=sums-products]
		\item\label{itm:linsum} the positive cone of the linear topological direct sum $\linsum_\alpha E_\alpha$ is semisimple;
		\item\label{itm:prod} the positive cone of the topological product $\prod_\alpha E_\alpha$ is semisimple;
		\item\label{itm:factors} for every $\alpha \in A$, the positive cone of $E_\alpha$ is semisimple.
	\end{enumerate}
	If each $E_\alpha$ is locally convex, then the preceding conditions are also equivalent to:
	\begin{enumerate}[resume*=sums-products]
		\item\label{itm:lcsum} the positive cone of the locally convex direct sum $\lcsum_\alpha E_\alpha$ is semisimple.
	\end{enumerate}
\end{proposition}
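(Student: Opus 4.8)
The plan is to prove the cycle $\myref{itm:factors}\Rightarrow\myref{itm:prod}\Rightarrow\myref{itm:linsum}\Rightarrow\myref{itm:factors}$, and in the locally convex case to splice $\myref{itm:lcsum}$ into it via $\myref{itm:prod}\Rightarrow\myref{itm:lcsum}\Rightarrow\myref{itm:linsum}$. Two tools do all the work: the criterion that $E_+$ is semisimple if and only if $E_+\topdual$ separates points on $E$ (\myautoref{thm:pos:semisimple}{itm:pos:dual-cone-separates-points}), and \autoref{prop:semisimple-subspace}, which transfers semisimplicity from the codomain of a continuous positive injection back to its domain. Note that in all the spaces below the positive cone is the pointwise cone $\prod_\alpha E_\alpha^+$ or $\bigoplus_\alpha E_\alpha^+$, so every canonical map in sight is automatically positive.

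The implication $\myref{itm:factors}\Rightarrow\myref{itm:prod}$ I would prove directly with the dual-cone criterion. Given a nonzero $x=(x_\alpha)_\alpha\in\prod_\alpha E_\alpha$, fix $\beta$ with $x_\beta\neq 0$; semisimplicity of $E_\beta$ yields $\varphi\in(E_\beta^+)\topdual$ with $\varphi(x_\beta)\neq 0$. The coordinate projection $\pi_\beta\colon\prod_\alpha E_\alpha\to E_\beta$ is continuous and carries $\prod_\alpha E_\alpha^+$ into $E_\beta^+$, so $\varphi\circ\pi_\beta$ is a continuous positive functional on $\prod_\alpha E_\alpha$ not vanishing at $x$. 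Hence the dual cone of $\prod_\alpha E_\alpha^+$ separates points. (One could equally well assemble an injective positive representation of $\prod_\alpha E_\alpha$ from those of the factors, using $\prod_\alpha\Cp(\Omega_\alpha)\cong\Cp(\bigsqcup_\alpha\Omega_\alpha)$, but the pointwise argument is shorter.)

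The remaining implications are applications of \autoref{prop:semisimple-subspace} to the obvious canonical maps, each continuous, positive, and injective. For $\myref{itm:prod}\Rightarrow\myref{itm:linsum}$, use the natural map $\linsum_\alpha E_\alpha\to\prod_\alpha E_\alpha$, continuous because the relative product topology on $\bigoplus_\alpha E_\alpha$ is coarser than the direct sum topology. For $\myref{itm:linsum}\Rightarrow\myref{itm:factors}$, use for each $\beta$ the canonical injection $\iota_\beta\colon E_\beta\to\linsum_\alpha E_\alpha$, continuous by the very definition of the direct sum topology. In the locally convex case, get $\myref{itm:prod}\Rightarrow\myref{itm:lcsum}$ from the continuous injection $\lcsum_\alpha E_\alpha\to\prod_\alpha E_\alpha$, and $\myref{itm:lcsum}\Rightarrow\myref{itm:linsum}$ from the identity map $\linsum_\alpha E_\alpha\to\lcsum_\alpha E_\alpha$, which is continuous because every locally convex topology is in particular a linear topology, so the finest locally convex topology making the $\iota_\beta$ continuous is coarser than the finest such linear one.

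I do not expect a genuine obstacle: the proof is a matter of combining the dual-cone criterion, \autoref{prop:semisimple-subspace}, and standard facts about the comparison of the various topologies on $\bigoplus_\alpha E_\alpha$. The only point needing a moment's care is the bookkeeping of which topology refines which, so that each canonical map points in the direction required by \autoref{prop:semisimple-subspace}; routing the argument through the product conveniently avoids ever having to identify the topological dual of the linear topological direct sum.
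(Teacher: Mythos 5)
Your proposal is correct and matches the paper's proof essentially step for step: the same cycle through the product, the same direct dual-cone argument for \myref{itm:factors}$\Rightarrow$\myref{itm:prod} via $\varphi\circ\pi_\beta$, and the same use of \autoref{prop:semisimple-subspace} with the canonical injections and the comparison of the direct-sum, locally convex direct-sum, and relative product topologies. No gaps.
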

\begin{proof}
	$\myref{itm:linsum} \Longrightarrow \myref{itm:factors}$. This follows from \autoref{prop:semisimple-subspace}, since for every $\beta \in A$ the natural map $\iota_\beta : E_\beta \to \linsum_\alpha E_\alpha$ is injective, continuous, and positive.
	
	$\myref{itm:factors} \Longrightarrow \myref{itm:prod}$. We show that the continuous positive linear functionals separate points on $\prod_\alpha E_\alpha$. Suppose that $(x_\alpha)_\alpha \in \prod_\alpha E_\alpha$ is non-zero. Choose some $\beta \in A$ such that $x_\beta \neq 0$. Since $E_\beta$ is semisimple, there is a continuous positive linear functional $\varphi : E_\beta \to \R$ such that $\varphi(x_\beta) \neq 0$. The projection $\pi_\beta : \prod_\alpha E_\alpha \to E_\beta$ is continuous, linear, and positive, so it follows that $\varphi \circ \pi_\beta$ is a continuous positive linear functional on $\prod_\alpha E_\alpha$ with $(\varphi \circ \pi_\beta)((x_\alpha)_\alpha) \neq 0$.
	
	$\myref{itm:prod} \Longrightarrow \myref{itm:linsum}$. This follows from \autoref{prop:semisimple-subspace}, since the inclusion $\linsum_\alpha E_\alpha \to \prod_\alpha E_\alpha$ is continuous and positive.
	
	\emph{Locally convex case, $\myref{itm:prod} \Longrightarrow \myref{itm:lcsum} \Longrightarrow \myref{itm:linsum}$.} This follows from \autoref{prop:semisimple-subspace}, since the locally convex direct sum topology $\lcsum_\alpha E_\alpha$ is coarser than the topological direct sum topology $\linsum_\alpha E_\alpha$ and finer than the relative product topology on the subspace $\bigoplus_\alpha E_\alpha \subseteq \prod_\alpha E_\alpha$.
\end{proof}

\subsection{Quotients}
Semisimplicity is not preserved by proper quotients.

Evidently the pushforward of a semisimple cone along a quotient $E \to E/I$ is not necessarily a proper cone. (Example: if $I \subseteq E$ is a non-supporting linear hyperplane, then $E/I \cong \R$ and the pushforward of $E_+$ is all of $\R$.) But even if the quotient is proper (i.e.~if $I$ is an \emph{ideal} in the sense of \cite{Kadison-representation} and \cite{Bonsall}; see also \cite[Appendix A.1]{ordered-tensor-products-i}), it might still fail to be semisimple, as the following example shows.

\begin{example}
	Let $E = \R^3$ with the second-order cone $E_+ = \{(x_1,x_2,x_3) \, : \, \sqrt{x_1^2 + x_2^2} \leq x_3\}$. Then $E_+$ is a closed proper cone, and is therefore semisimple. However, if $x \in E_+ \setminus \{0\}$ defines an extremal ray of $E_+$, then we claim that the pushforward of $E_+$ along the quotient $E \to E/\spn(x)$ is the union of an open half-plane and $\{0\}$. Indeed, since $E_+$ is the convex cone generated by the unit disc in the $x_3 = 1$ plane, the quotient of $E_+$ by an extremal ray is generated by a disc in the plane with the origin on its boundary. Such a quotient is proper, but not semisimple (i.e.~its closure is not a proper cone).
\end{example}

If $E$ is locally convex and if $I \subseteq E$ is a closed subspace, then it follows from \cite[Proposition 2a]{positive-extensions} and \myautoref{thm:pos:semisimple}{itm:pos:dual-cone-separates-points} that the pushforward of $E_+$ along the quotient $E \to E/I$ is semisimple if and only if $\spn((I^\perp)_+)$ is weak\nobreakdash-$*$ dense in $I^\perp$, where $(I^\perp)_+ := I^\perp \cap E_+'$ denotes the positive part of $I^\perp$.
In particular, if $E$ is finite-dimensional, then the pushforward of $E_+$ along the quotient $E/I$ is semisimple if and only if the positive cone of $I^\perp$ is generating. In the preceding example, $I^\perp$ is a supporting hyperplane of $E_+\algdual \cong E_+$, but the positive part of $I^\perp$ is only one-dimensional, so not generating.

\subsection{Completions}
Semisimplicity is not preserved by completions.

\begin{example}
	\label{xmpl:completion}
	Let $E = C^1[0,1]$ with the norm $\lVert f\rVert_E = \lVert f\rVert_\infty + |f'(0)|$ and the standard cone $E_+ = \{f \in C^1[0,1] \, : \, f(x) \geq 0\ \text{for all $x\in[0,1]$}\}$. Since $E_+$ is closed with respect to the smaller (i.e.~topologically weaker/coarser) norm $\lVert \,\cdot\, \rVert_\infty$, it is also closed with respect to $\lVert \,\cdot\, \rVert_E$, so $E_+$ is semisimple in $E$.
	
	We show that $E_+$ is not semisimple in $\tilde E$. Let $g : \R \to \R$ be a continuously differentiable function with $0 \leq g(x) \leq 1$ for all $x\in \R$ and with $g'(0) \neq 0$. Define $\{g_n\}_{n=1}^\infty$ by $g_n(x) = \frac{g(nx)}{n}$. Then $g_n$ is continuously differentiable with $g_n'(x) = g'(nx)$, so we have
	\[ \lVert g_n - g_m\rVert_E = \sup_{x\in[0,1]} | g_n(x) - g_m(x) | + | g'(0) - g'(0) | \leq \frac{1}{n} + \frac{1}{m}. \]
	It follows that $\{g_n\}_{n=1}^\infty$ is Cauchy in $E$. Let $\tilde g$ denote the limit of $\{g_n\}_{n=1}^\infty$ in $\tilde E$. Since $g_n \in E_+$ for all $n \in \N_1$, it follows that $\tilde g \in \widetilde{E_+}$ (the closure of $E_+$ in $\tilde E$). 
	
	If $\one \in E$ denotes the constant function $x \mapsto 1$, then $\lim_{n\to\infty} \frac{1}{n}\one = 0$, so $\lim_{n\to\infty} g_n - \frac{1}{n}\one = \tilde g$. Since $g_n - \frac{1}{n}\one \in -E_+$, it follows that $\tilde g \in -\widetilde{E_+}$. But $\tilde g \neq 0$, because $\lVert g_n\rVert_E \geq |g_n'(0)| = |g'(0)| \neq 0$ for all $n \in \N_1$ (the sequence is bounded away from $0$), so $\widetilde{E_+}$ is not a proper cone.
\end{example}

Note: since the strong (= norm) bidual of a normed space $E$ isometrically contains the completion $\tilde E$, the preceding example also shows that the strong bidual of a semisimple ordered topological vector space is not necessarily semisimple.

\section{Some examples and counterexamples}
\label{sec:examples}
We present a few examples of semisimple cones, as well as examples of cones which are regular but not semisimple.

\subsubsection*{Finite-dimensional cones}
By \autoref{thm:pos:semisimple} and \autoref{thm:Schaefer-regular}, in a finite-dimensional space the classes of normal, regular, and semisimple cones all coincide, and this is just the class of convex cones whose closure is a proper cone. Notable cones outside this class include the lexicographic cones.

\subsubsection*{Classical function and sequence spaces}
In most classical function spaces, the natural cone of non-negative functions is closed and proper. If the topology of such a space is locally convex, then the positive cone is also weakly closed (because it is convex), and therefore semisimple.
Examples of this type include classical Banach spaces such as $C_b(\Omega)$ and $L^p(\Omega,\Sigma,\mu)$ ($1 \leq p \leq \infty$), and traditional Fr\'echet spaces such as $C^\infty(\R^n)$ and the Schwartz space $\mathcal S(\R^n)$.
Likewise, the positive cone of most classical sequence spaces is semisimple.

That these ordered topological vector spaces are semisimple is not so surprising, seeing as one of the equivalent definitions of semisimplicity is allowing a representation to a space of continuous functions. If $E$ is a space of functions $\Omega \to \R$, and if $\Omega_d$ denotes $\Omega$ equipped with the discrete topology, then the natural representation $E \to \Cp(\Omega_d)$ is continuous if all the point evaluations are. (For $L^p$ spaces one has to do a bit more work, since an $L^p$ space is actually not a space of functions, but rather a \emph{quotient} of a space of functions. In particular, the point evaluation functionals might not be well-defined.)

Things can go wrong for function spaces that are not locally convex. A notable counterexample is the space $L^p[0,1]$ for $p \in (0,1)$. Although the positive cone is still closed and proper, there are no (non-zero) continuous linear functionals, so the positive cone fails to be semisimple in a rather dramatic way. (In fact, it is well known that this cone is not even regular, since there are no positive linear functionals on this space; this follows for instance from \cite[Corollary 2.34]{Aliprantis-Tourky}.)

\subsubsection*{\texorpdfstring{$C^*$}{C*}-algebras}
The positive cone of a $C^*$-algebra is closed and proper, and therefore also weakly closed (since it is a convex set in a locally convex space).
It follows that the self-adjoint part of a $C^*$-algebra is a (real) semisimple ordered topological vector space.
(It is well-known that $C^*$-algebras are also semisimple in the algebraic sense.)

\subsubsection*{An example of a regular cone which is not semisimple}
The following example shows that semisimplicity is slightly stronger than regularity.

\begin{example}
	\label{xmpl:not-topologically-order-semisimple}
	Let $E := \R[X]$ be the algebra of real-valued polynomials in one variable, let $a,b\in\R$ be real numbers satisfying $1 \leq a < b$, and let $E_+ \subseteq E$ be the cone of polynomials that are non-negative on $[a,b]$ (i.e.~the cone inherited from the inclusion $\R[X] \hookrightarrow C[a,b]$). Then $E_+$ is Archimedean and regular.
	
	We equip $E$ with the coefficient-wise supremum norm $\lVert\:\cdot\:\rVert_{00}$ (i.e.~the norm obtained from the natural linear isomorphism $\R[X] \cong c_{00} \subseteq c_0$). We show that $E_+$ is not semisimple. In fact, we prove something stronger: $E_+$ is dense in $E$. Note that it follows from this that $E$ does not admit (non-zero) continuous positive linear functionals.
	
	To prove our claim, let $g\in E$ be any polynomial, and let $\varepsilon > 0$ be given. Since we assumed $a \geq 1$, the series $\sum_{n=0}^\infty a^n$ diverges, so we may choose $N \in \N$ such that $\sum_{n=0}^N a^n > \frac{\lVert g\rVert_{[a,b]}}{\varepsilon}$ (where $\lVert g\rVert_{[a,b]}$ denote the maximum of $|g|$ on the interval $[a,b]$). Then, by monotonicity, we have $\sum_{n=0}^N x^n > \frac{\lVert g\rVert_{[a,b]}}{\varepsilon}$ for all $x\in [a,b]$, so it follows that the polynomial $g + \varepsilon \sum_{n=0}^N x^n$ is non-negative on $[a,b]$, and therefore belongs to $E_+$. This shows that $d(g,E_+) \leq \varepsilon$, where the distance is taken with respect to the $\lVert\:\cdot\:\rVert_{00}$-norm on $E$. As this holds for every $\varepsilon > 0$, it follows that $g \in \overline{E_+}$. We conclude that $E_+$ is dense in $E$, which proves our claim.
\end{example}

\subsubsection*{Spaces of unbounded functions}
We present another class of convex cones which are regular but not semisimple.
For an arbitrary set $\Omega$, we let $\R^\Omega$ denote the vector space of all functions $\Omega \to \R$, equipped with the cone $\R_+^\Omega$ of everywhere non-negative functions.
If $\Omega_d$ denotes $\Omega$ with the discrete topology, then $\R^\Omega = C(\Omega_d)$, so it follows from \autoref{cor:pos:regular} that every subspace of $\R^\Omega$ is regularly ordered.

Let $E \subseteq \R^\Omega$ be a subspace, ordered by the induced cone $E_+ := \R_+^\Omega \cap E$.
If $E_+$ is semisimple with respect to some norm $\lVert \,\cdot\, \rVert_1$ on $E$, then it follows from \myautoref{thm:pos:semisimple}{itm:pos:coarser-normality} that $E$ admits a smaller norm $\lVert \,\cdot\, \rVert_2$ for which $E_+$ is normal, so that $\lVert \,\cdot\, \rVert_2$ is equivalent to a monotone norm $\lVert \,\cdot\, \rVert_3$ (cf.~\cite[Theorem 2.38]{Aliprantis-Tourky}). Conversely, if $\lVert \,\cdot\, \rVert_4$ is a monotone norm on $E$, then $E_+$ is automatically semisimple with respect to $\lVert \,\cdot\, \rVert_4$, by \myautoref{thm:pos:semisimple}{itm:pos:coarser-normality}.
Therefore we find that \emph{$E$ admits a norm for which $E_+$ is semisimple if and only if $E$ admits a monotone norm}.

In what follows, we present a class of function spaces which do not admit a monotone norm.
By the preceding considerations, the positive cone of such a space cannot be semisimple with respect to any norm, despite being regular.
Since every vector space $E$ admits a norm%
\hair\footnote{Proof: choose a basis $\mathcal B$ of $E$. Then $E$ is linearly isomorphic to the space $\R^{\oplus \mathcal B}$ of all functions $\mathcal B \to \R$ of finite support, which can be equipped with an $\ell^p$-norm ($1 \leq p \leq \infty$).}%
, this furnishes a class of regularly ordered normed spaces that are not semisimple.

A subset $S \subseteq \R$ is \emph{locally finite} if for every $x \in \R$ there is a neighbourhood $U_x$ of $x$ such that $S \cap U_x$ is finite, or equivalently, if $S \cap C$ is finite for every compact subset $C \subseteq \R$.%
\hair\footnote{Since $\R$ is locally compact, the second property clearly implies the first. For the converse, choose for every $x \in \R$ an open neighbourhood $U_x$ of $x$ such that $S \cap U_x$ is finite. If $C \subseteq \R$ is compact, then $C \subseteq \bigcup_{x\in \R} U_x$, so there is a finite set $F \subseteq \R$ such that $C \subseteq \bigcup_{x\in F} U_x$. Therefore $|S \cap C| \leq \sum_{x\in F} |S \cap U_x| < +\infty$.}
Furthermore, a function $g : \R \to \R$ is \emph{piecewise linear} if $g$ is continuous and there is a locally finite subset $S \subseteq \R$ such that $g$ is affine on each connected component of $\R \setminus S$.

\begin{theorem}
	\label{thm:unbounded-functions}
	\renewcommand{\labelenumi}{\roman{enumi}.}
	Let $E \subseteq \R^\Omega$ be a subspace with the following property:
	\[ \text{If $g : \R \to \R$ is piecewise linear and if $f \in E$ is arbitrary, then $g \circ f \in E$.} \tag*{$(*)$} \]
	Then $E$ admits a monotone norm if and only if every function in $E$ is bounded.
\end{theorem}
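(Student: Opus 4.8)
The plan is to treat the two implications separately, the forward one being the substantial part.

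For the implication ``every function in $E$ is bounded $\implies$ $E$ admits a monotone norm,'' there is nothing to do beyond the observation that $E\subseteq\ell^\infty(\Omega)$, so the supremum norm $\lVert g\rVert_\infty:=\sup_{\omega\in\Omega}\lvert g(\omega)\rvert$ is defined on all of $E$; it is a genuine norm (positive-definiteness holds because the cone of $E$ is a subcone of the Archimedean cone $\R_+^\Omega$), and it is monotone, since $0\le g\le h$ pointwise forces $\lVert g\rVert_\infty\le\lVert h\rVert_\infty$. This direction does not use $(*)$.

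For the converse, discard the trivial case $E=\{0\}$. Applying $(*)$ to the constant piecewise-linear functions gives $\one\in E$; applying it to $t\mapsto\lvert t\rvert$, $t\mapsto\max(t,c)$, $t\mapsto\min(t,c)$, $t\mapsto(t-c)^{+}$ shows that $E$ is a Riesz subspace of $\R^\Omega$ containing $\one$ and closed under truncation by scalar multiples of $\one$; and ``every function in $E$ is bounded'' is exactly the statement that $\one$ is an order unit of $E$. So the task is: \emph{a monotone norm on $E$ forces $\one$ to be an order unit.} (By the discussion preceding the theorem together with \autoref{thm:pos:semisimple}, admitting a monotone norm is equivalent to admitting a norm making $E_+$ semisimple, and a monotone norm is one such; so this reformulation is indeed equivalent to the theorem.) I would argue by contradiction: suppose $\lVert\,\cdot\,\rVert$ is a monotone norm on $E$ and some $f\in E$ is unbounded; replacing $f$ by $\lvert f\rvert=(t\mapsto\lvert t\rvert)\circ f\in E$, we may assume $f\ge 0$ and $\sup_\omega f(\omega)=\infty$, and normalize $\lVert\one\rVert=1$.

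The engine of the contradiction should be the arbitrarily rapid growth of $f$ together with $(*)$. Using $\sup f=\infty$, choose $\omega_1,\omega_2,\dots$ with $a_n:=f(\omega_n)$ strictly increasing to $\infty$ so quickly that $a_{n+1}>2a_n\ge 4$ and $\sum_n a_n^{-1}<\infty$, and for each $n$ let $\tau_n:=\Lambda_n\circ f\in E$, where $\Lambda_n$ is the piecewise-linear tent of height $1$ supported on $[a_n-1,a_n+1]$. The $\tau_n$ are nonzero with pairwise disjoint supports in $\Omega$, and the key structural point is that for \emph{arbitrary} scalars $(c_n)$ the function $\sum_n c_n\Lambda_n$ is piecewise linear (its breakpoints lie in $\bigcup_n\{a_n-1,a_n,a_n+1\}$, which clusters only at $+\infty$, hence is locally finite), so $(*)$ gives $\sum_n c_n\tau_n=(\sum_n c_n\Lambda_n)\circ f\in E$ for every choice of $(c_n)$. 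Taking $c_n:=a_n-1$, disjointness of supports yields $\sum_n c_n\tau_n\le f$ pointwise, whence, by monotonicity and $0\le c_k\tau_k\le\sum_n c_n\tau_n$, one gets $c_k\lVert\tau_k\rVert\le\lVert f\rVert$, so $\lVert\tau_k\rVert\le\lVert f\rVert/(a_k-1)\to 0$; the same reasoning applied to the ``tails'' $\sum_{k>n}\tau_k$ (which are $\le f$ and supported where $f\ge a_{n+1}-1$) gives $\lVert\sum_{k>n}\tau_k\rVert\to 0$, and likewise for other scaled sums of the $\tau_n$. The main obstacle, and the step I would have to think hardest about, is converting these facts into an outright contradiction with $\lVert\,\cdot\,\rVert$ being a \emph{definite} norm. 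My intended route is to use the tents together with $\one$ and further piecewise-linear compositions to manufacture either (i) a single nonzero $h\in E$ with $0\le h\le f$ that is simultaneously dominated by $\varepsilon\cdot(\text{fixed data in }E)$ for every $\varepsilon>0$, forcing $\lVert h\rVert=0$; or (ii) a nonzero $h$ lying in both the norm-closures $\overline{E_+}$ and $\overline{-E_+}$, which is impossible for a monotone norm (if $p_n,q_n\in E_+$ with $p_n\to h$ and $q_n\to-h$, then $0\le p_n\le p_n+q_n\to 0$, so $h=0$, i.e.\ $\overline{E_+}$ is automatically proper). The delicate point is that a naive pointwise comparison cannot achieve this — if $G\circ f$ must stay below a fixed $\phi\circ f$ then it can never dominate large multiples of a fixed nonzero function — so the construction has to be routed through the growth of $f$ itself: the far-out tents cost less and less in norm \emph{precisely because $f$ dominates ever-larger multiples of them}, and the challenge is to package this self-domination into a comparison that a monotone norm cannot survive.
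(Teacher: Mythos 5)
Your setup is sound up to a point --- reduction to $f\ge 0$ unbounded, the tents $\Lambda_n$, and above all the structural observation that, because the breakpoints accumulate only at $+\infty$, the sum $\sum_n c_n\Lambda_n$ is piecewise linear for \emph{arbitrary} scalars $(c_n)$, so $(*)$ puts $\bigl(\sum_n c_n\Lambda_n\bigr)\circ f$ into $E$ no matter how fast the $c_n$ grow. But the proof is not complete: the actual contradiction is missing, and you say so yourself. The direction you pushed --- choosing $c_n=a_n-1$ so that the sum is dominated by $f$ --- cannot produce a contradiction on its own: it only shows $\lVert\tau_k\rVert\le\lVert f\rVert/(a_k-1)\to 0$, and there is nothing contradictory about a monotone norm assigning small norms to these tents (this happens, e.g., for the supremum norm on a space of bounded functions, and more to the point nothing in your estimates distinguishes the unbounded case yet). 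Your fallback routes (i) and (ii) are not carried out, and route (i) as stated cannot work for a fixed $h$ and fixed dominating element.

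The missing idea is to use your own lever in the opposite direction: do not choose the coefficients to be dominated by $f$; choose them \emph{according to the norms you want to contradict}. Set $c_n:=n/\lVert\tau_n\rVert$ (this is legitimate since $\tau_n\neq 0$ because $f$ is unbounded). Local finiteness of the breakpoints still makes $h:=\sum_n c_n\Lambda_n$ piecewise linear, so $h\circ f\in E$ by $(*)$, and disjointness of supports gives $0\le c_n\tau_n\le h\circ f$ pointwise; monotonicity then forces $\lVert h\circ f\rVert\ge c_n\lVert\tau_n\rVert=n$ for every $n$, which is absurd for a single element of $E$. This is exactly the paper's argument, carried out there with the ramps $g_n(x)=((x-n+1)\vee 0)\wedge 1$ and the upper envelope $g_\infty=\max_n \alpha_n g_n$ with $\alpha_n=n/\lVert g_n\circ f\rVert$ in place of your tents and sum (either variant works, and the condition $\sum_n a_n^{-1}<\infty$ is not needed). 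The point to internalize is that the unboundedness of $f$ is used only to guarantee that the far-out bumps are nonzero elements of $E$; the contradiction comes from feeding the norm's own values back into the piecewise-linear envelope, which $(*)$ tolerates because piecewise linearity imposes no growth restriction.
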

\begin{proof}
	If every function in $E$ is bounded, then $E$ can be normed with the supremum norm $\lVert \:\cdot\: \rVert_\infty$, which is monotone. For the converse, assume that $\lVert \:\cdot\: \rVert : E \to \R_{\geq 0}$ is a monotone norm. Suppose, for the sake of contradiction, that there exists an unbounded function $f \in E$. By $(*)$, we also have $|f| \in E$, so let us assume without loss of generality that $f$ is positive and unbounded. Define piecewise linear functions $g_n : \R \to \R$ by
	\[ g_n(x) \: := \: ((x - n + 1) \vee 0) \wedge 1 \: = \: \begin{cases}
		0,&\quad\text{if $x < n - 1$};\\\noalign{\smallskip}
		x - n + 1,&\quad\text{if $n - 1 \leq x \leq n$};\\\noalign{\smallskip}
		1,&\quad\text{if $x > n$}.
	\end{cases} \]
	By $(*)$, we have $g_n \circ f \in E$ for all $n\in\N_1$. Furthermore, we have $g_n \circ f \geq 0$, and $g_n \circ f \neq 0$ since $f$ is unbounded. Define $\{\alpha_n\}_{n=1}^\infty$ in $\R_{>0}$ by
	\[ \alpha_n \: := \: \frac{n}{\,\lVert g_n \circ f\rVert\,}. \]
	For every fixed $x\in\R$, at most finitely many $g_n$ take a non-zero value at $x$, so we may define $g_\infty : \R \to \R$ by
	\[ g_\infty(x) \: := \: \max_{n\in\N_1} \alpha_n g_n(x). \]
	The situation is illustrated in \autoref{fig:unbounded-functions} below.\par
	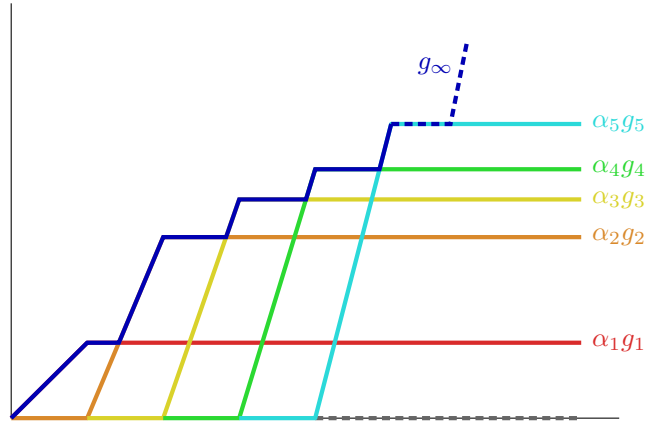
\begin{figure}[h!t]
		\centering
		\begin{tikzpicture}
			\draw (0,5.5) -- (0,0) -- (8,0);
			\draw[gray!80!black,ultra thick,densely dashed] (4,0) -- (7.5,0);
			\begin{scope}[ultra thick,line join=round]
				\definecolor{huidige}{hsb}{0,.8,1}
				\draw[huidige!85!black] (0,0) --  (1,1)  -- (7.5,1) node[right] {$\alpha_1 g_1$};
				\definecolor{huidige}{hsb}{0.09,.8,1}
				\draw[huidige!85!black] (0,0) -- (1,0) -- (2,2.4) -- (7.5,2.4) node[right] {$\alpha_2 g_2$};
				\definecolor{huidige}{hsb}{0.16,.8,1}
				\draw[huidige!85!black] (1,0) -- (2,0) -- (3,2.9) -- (7.5,2.9) node[right] {$\alpha_3 g_3$};
				\definecolor{huidige}{hsb}{0.34,.8,1}
				\draw[huidige!85!black] (2,0) -- (3,0) -- (4,3.3) -- (7.5,3.3) node[right] {$\alpha_4 g_4$};
				\definecolor{huidige}{hsb}{0.5,.8,1}
				\draw[huidige!85!black] (3,0) -- (4,0) -- (5,3.9) -- (7.5,3.9) node[right] {$\alpha_5 g_5$};
			\end{scope}
			\draw[blue!70!black,ultra thick] (0,0) -- (1,1) -- (1.41666,1) -- (2,2.4) -- (2.8276,2.4) -- (3,2.9) -- (3.8788,2.9) -- (4,3.3) -- (4.8462,3.3) -- (5,3.9);
			\draw[blue!70!black,ultra thick,densely dashed] (5,3.9) -- (5.78,3.9) to node[left,outer sep=0mm,pos=.7] {$g_\infty$} (6,5);
		\end{tikzpicture}
		\caption{$g_\infty$ is the upper envelope of the sequence $\{\alpha_n g_n\}_{n=1}^\infty$.}
		\label{fig:unbounded-functions}
	\end{figure}
	This function $g_\infty$ is again piecewise linear, so by $(*)$ we have $g_\infty \circ f \in E$. But for all $n\in\N_1$ we have $0 \leq \alpha_n g_n \circ f \leq g_\infty \circ f$, and therefore
	\[ \lVert g_\infty \circ f\rVert \: \geq \: \lVert \alpha_ng_n \circ f\rVert \: = \: n. \]
	This is a contradiction, so we conclude that every $f \in E$ must be bounded.
\end{proof}
\begin{corollary}
	Let $E \subseteq \R^\Omega$ be a space of functions which satisfies property $(*)$ from \autoref{thm:unbounded-functions} and which contains unbounded functions. Then $E_+$ is not semisimple with respect to any norm on $E$, despite being regular.
\end{corollary}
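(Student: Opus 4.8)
The plan is to read this off \autoref{thm:unbounded-functions} together with the equivalence recorded in the paragraph preceding it (that a subspace of $\R^\Omega$ admits a norm making $E_+$ semisimple exactly when it admits a monotone norm). Regularity is the easy half. Since $\R^\Omega = C(\Omega_d)$, where $\Omega_d$ denotes $\Omega$ with the discrete topology, \autoref{cor:pos:regular} shows that $\R_+^\Omega$ is regular; and regularity passes to subspaces, for instance because the restrictions to $E$ of the coordinate functionals lie in $E_+\algdual$ and already separate points on $E$, which is condition \myref{itm:pos:intersection-hyperplanes}'s companion \textit{(i)} in \autoref{thm:Schaefer-regular}. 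Hence $E_+$ is regular regardless of boundedness.

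For the other half I would argue by contradiction. Suppose $E_+$ were semisimple with respect to some norm $\lVert\,\cdot\,\rVert_1$ on $E$. By the normed case of \autoref{thm:pos:semisimple}, property \myautoref{thm:pos:semisimple}{itm:pos:coarser-normality} then supplies a coarser \emph{normable} topology on $E$ for which $E_+$ is normal; equivalently, $E$ carries a norm $\lVert\,\cdot\,\rVert_2$ (smaller than $\lVert\,\cdot\,\rVert_1$) with $E_+$ normal. A norm whose positive cone is normal is equivalent to a monotone one (e.g.\ \cite[Theorem 2.38]{Aliprantis-Tourky}), so $E$ admits a monotone norm. But \autoref{thm:unbounded-functions} says that a subspace $E \subseteq \R^\Omega$ satisfying property $(*)$ admits a monotone norm only if every function in $E$ is bounded, contradicting the hypothesis that $E$ contains unbounded functions. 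Therefore $E_+$ is not semisimple with respect to any norm on $E$.

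There is essentially no genuine obstacle: the corollary is a formal consequence of \autoref{thm:unbounded-functions} and the already-established fact that, for such $E$, ``some norm makes $E_+$ semisimple'' is the same as ``$E$ admits a monotone norm''. The only step deserving a moment of care is the passage from ``semisimple for some norm'' to ``a monotone norm exists'': this combines that \myref{itm:pos:coarser-normality} in \autoref{thm:pos:semisimple} may be taken normable in the normed case with the standard fact that normality of the cone renders a norm equivalent to a monotone one.
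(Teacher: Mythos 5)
Your proposal is correct and follows the same route as the paper: the paper's preceding paragraph establishes exactly the equivalence ``$E_+$ semisimple for some norm $\iff$ $E$ admits a monotone norm'' via the normable case of \myautoref{thm:pos:semisimple}{itm:pos:coarser-normality} and the equivalence of a norm with normal cone to a monotone norm, and regularity comes from $\R^\Omega = C(\Omega_d)$ together with \autoref{cor:pos:regular}, so the corollary is the same formal consequence of \autoref{thm:unbounded-functions} that you describe. No gaps; your separate justification of regularity via coordinate functionals separating points is a harmless elaboration of the paper's appeal to \autoref{cor:pos:regular}.
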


We discuss a few concrete cases. First of all, recall that a topological space $\Omega$ is called \emph{pseudocompact} if every continuous function $\Omega \to \R$ is bounded. Clearly every compact topological space is pseudocompact, but there are non-compact spaces which are nevertheless pseudocompact. (Well-known examples include the least uncountable ordinal $\omega_1$ equipped with the order topology, or $\R$ equipped with the left order topology.) Using this terminology, we have the following immediate consequence of \autoref{thm:unbounded-functions}.
\begin{corollary}
	\label{cor:pseudocompact}
	Let $\Omega$ be a topological space. Then $C(\Omega)$ admits a monotone norm if and only if $\Omega$ is pseudocompact.
\end{corollary}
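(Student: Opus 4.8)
The plan is to deduce this immediately from \autoref{thm:unbounded-functions} applied to the subspace $E := C(\Omega) \subseteq \R^\Omega$. The only thing to check is that $C(\Omega)$ satisfies property $(*)$: if $g : \R \to \R$ is piecewise linear and $f \in C(\Omega)$ is arbitrary, then $g \circ f \in C(\Omega)$. Since a piecewise linear function is by definition continuous, and the composition of two continuous functions is continuous, we have $g \circ f \in C(\Omega)$, so $(*)$ holds. \autoref{thm:unbounded-functions} then tells us that $C(\Omega)$ admits a monotone norm if and only if every function in $C(\Omega)$ is bounded.

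It remains only to observe that ``every function in $C(\Omega)$ is bounded'' is precisely the definition of pseudocompactness of $\Omega$, which was recalled just before the statement. Combining these two facts gives the equivalence. There is no real obstacle here: the corollary is an immediate specialisation of \autoref{thm:unbounded-functions}, with the verification of $(*)$ being the only (trivial) point to address.
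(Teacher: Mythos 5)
Your proposal is correct and is exactly the paper's argument: the corollary is stated there as an immediate consequence of \autoref{thm:unbounded-functions}, with property $(*)$ holding for $C(\Omega)$ because piecewise linear functions are by definition continuous, and boundedness of all continuous functions being precisely pseudocompactness. Nothing further is needed.
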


For a second example, note first that \autoref{thm:unbounded-functions} does not apply to $\Ell^p$ spaces for $1 \leq p < \infty$. Although the composition $g \circ f$ is again measurable, there is no guarantee that it remains integrable. Indeed, there exist $\Ell^p$ spaces which admit a monotone norm \emph{and} contain unbounded functions. For instance, consider the measure space $(\N_1,\mathcal P(\N_1),\mu)$, where $\mu$ is the weighted counting measure given by $\mu(\{n\}) := \frac{1}{4^n}$. Now there are no non-empty null sets, so we have $\Ell^p(\mu) = L^p(\mu)$, and the $\Ell^p$ seminorm $\lVert \:\cdot\: \rVert_p$ is in fact a (monotone) norm. Nevertheless, this function space contains unbounded functions, such as $f(n) = 2^{n/p}$.

Although the theorem does not apply to $\Ell^p$ spaces in general, it does apply to all $\Ell^\infty$ spaces, for if $f$ is measurable and almost everywhere bounded, then so is $g\circ f$ for every piecewise linear function $g : \R \to \R$. As a consequence, we find that $\Ell^\infty[0,1]$ does not admit a monotone norm, since it contains unbounded functions. (Of course, it does admit a monotone \emph{seminorm}: the essential supremum seminorm.)

\section*{Acknowledgements}
I am grateful to Onno van Gaans, Marcel de Jeu, and Hent van Imhoff for many helpful comments and suggestions.
Part of this work was carried out while the author was partially supported by the Dutch Research Council (NWO), project number 613.009.127.

\phantomsection
\addcontentsline{toc}{chapter}{References}
\small


\begin{thebibliography}{Dob20+}
	\bibitem[AT07]{Aliprantis-Tourky} C.\hair D.~Aliprantis, R.~Tourky, \emph{Cones and Duality} (2007), Graduate Studies in Mathematics 84, American Mathematical Society.
	
	\bibitem[Bon54]{Bonsall} F.\hair F.~Bonsall, \emph{Sublinear functionals and ideals in partially ordered vector spaces}, Proceedings of the London Mathematical Society, 3rd series, vol.~4 (1954), issue 1, pp.~402--418.
	
	\bibitem[Dob20a]{ordered-tensor-products-i} J.~van Dobben de Bruyn, \emph{Tensor products of convex cones, part I: mapping properties, faces, and semisimplicity} (2020), arXiv preprints.
	
	\bibitem[Dob20b]{positive-extensions} J.~van Dobben de Bruyn, \emph{Almost all positive linear functionals can be extended} (2020), arXiv preprints.
	
	\bibitem[Dob20+]{order-unitizations} J.~van Dobben de Bruyn, \emph{An Archimedean order unitization of certain normed ordered vector spaces}, in preparation.
	
	\bibitem[Jar81]{Jarchow} H.~Jarchow, \emph{Locally Convex Spaces} (1981), Mathematische Leitf\"aden, Teubner
	
	\bibitem[Kad51]{Kadison-representation} R.\hair V.~Kadison, \emph{A representation theory for commutative topological algebra}, Memoirs of the American Mathematical Society, no.~7 (1951).
	
	\bibitem[KN63]{Kelley-Namioka} J.\hair L.~Kelley, I.~Namioka, \emph{Linear Topological Spaces}, Second corrected printing (1963), Graduate Texts in Mathematics 36, Springer.
	
	\bibitem[K\"ot83]{Kothe-I} G.~K\"othe, \emph{Topological Vector Spaces I}, second revised printing (1983), Grundlehren der mathematischen Wissenschaften 159, Springer.
	
	\bibitem[K\"ot79]{Kothe-II} G.~K\"othe, \emph{Topological Vector Spaces II} (1979), Grundlehren der mathematischen Wissenschaften 237, Springer.
	
	\bibitem[Osb14]{Osborne} M.\hair S.~Osborne. \emph{Locally Convex Spaces} (2014), Graduate Texts in Mathematics 269, Springer.
	
	\bibitem[PT09]{Paulsen-Tomforde} V.\hair I.~Paulsen, M.~Tomforde, \emph{Vector spaces with an order unit}, Indiana University Mathematics Journal, vol.~58 (2009), issue 3, pp.~1319--1359.
	
	\bibitem[PTT11]{Paulsen-Todorov-Tomforde} V.\hair I.~Paulsen, I.\hair G.~Todorov, M.~Tomforde, \emph{Operator system structures on ordered spaces}, Proceedings of the London Mathematical Society, vol.~102 (2011), issue 1, pp.~25--49.
	
	\bibitem[Per67]{Peressini} A.\hair L.~Peressini, \emph{Ordered Topological Vector Spaces} (1967), Harper's Series in Modern Mathematics, Harper \&\ Row.
	
	\bibitem[Rud91]{Rudin} W.~Rudin, \emph{Functional Analysis}, Second Edition (1991), McGraw--Hill.
	
	\bibitem[Sch58]{Schaefer-I} H.~Schaefer, \emph{Halbgeordnete lokalkonvexe Vektorr\"aume}, Mathematische Annalen, vol.~135 (1958), issue 2, pp.~115--141.
	
	\bibitem[Sch99]{Schaefer} H.\hair H.~Schaefer, with M.\hair P.~Wolff, \emph{Topological Vector Spaces}, Second Edition (1999), Graduate Texts in Mathematics 3, Springer.
\end{thebibliography}
\end{document}